\newtheorem{thm}{Theorem}[section]
\newtheorem{lem}[thm]{Lemma}
\newtheorem{prop}[thm]{Proposition}
\theoremstyle{definition}
\newtheorem{df}[thm]{Definition}
\newtheorem{AS}[thm]{Assumption}
\theoremstyle{remark}
\newtheorem{re}[thm]{Remark}
\numberwithin{equation}{section}
\theoremstyle{definition}
\newtheorem{definition}{Definition}[section]
\theoremstyle{remark}
\newcommand{\Rey}{\text{Re}}
\newcommand{\RN}[1]{%
  \textup{\uppercase\expandafter{\romannumeral#1}}%
}
\newcommand{\bff}{\mathbf{f}}
\newcommand{\bfx}{\mathbf{x}}
\newcommand{\bfu}{\mathbf{u}}
\newcommand{\bfv}{\mathbf{v}}
\newcommand{\bfw}{\mathbf{w}}
\newcommand{\eps}{\langle \varepsilon \rangle}
\newcommand{\bfV}{\mathbf{V}}
\newenvironment{Alirev}{\color{blue}}{\color{black}}
\newcommand{\AAA}{\begin{Alirev}}
\newcommand{\PPP}{\end{Alirev}}
 \date{\today}
\subjclass[2010]{Primary 76F55; Secondary 76D03}
\keywords{Turbulence, Eddy Viscosity, Energy Dissipation}
\begin{document}

%\today

\title{Statistics  in a Backscatter Eddy Viscosity Turbulence Model}

%    Information for first author

\author{Ali Pakzad$^*$}
\author{Farjana Siddiqua}
\thanks{$^*$Corresponding author. Email: \texttt{pakzad@csun.edu}}

%\author{Ali Pakzad\thanks{Corresponding author}
%\author{Farjana Siddiqua}

\maketitle
\setcounter{tocdepth}{1}

\begin{abstract}
This paper addresses two significant drawbacks of an eddy viscosity turbulence model: the issue of excessive dissipation relative to energy input and the lack of a universal parameter specification. Considering the Baldwin-Lomax model with backscatter effects, we first prove the existence and uniqueness of global weak solutions under mild conditions. Our next result shows that this model maintains energy dissipation rates consistent with energy input, thereby avoiding over-dissipation and aligning with $K41$ phenomenology. Additionally, we propose a range for the  model's mixing length.
\end{abstract}

\section{Introduction}

Eddy viscosity turbulence models, which intend to model averages instead of instantaneous of turbulent variables, are widely used for industrial, aeronautical, meteorological, and oceanographical applications \cites{P00,  W06, BIL06}. As all classical eddy viscosity models \cites{LN92, S07, S68, JL16}, the Baldwin-Lomax model \cite{BLM78} also has well-recognized limitations in not modeling backscatter or complex turbulence not at statistical equilibrium. Rong, Layton, and Zhao in \cite{RLZ19}  adopted the model to incorporate the effects of energy flow from fluctuations back to means (backscatter) for a fluid filling a domain with solid walls. With $\Omega \subset \mathbb{R}^3$ be a open domain bounded by $\partial \Omega$, the model is given by
\begin{equation}\label{BLM}
\begin{split}
\partial_t\bfu+   \beta^2   \,  \nabla \times (l^2(\bfx) \nabla \times \partial_t\bfu)  +  \nabla \cdot (\bfu \otimes \bfu) - &  \nu \Delta \bfu +  \nabla \times (l^2(\bfx) |\nabla \times \bfu| \nabla \times \bfu )+  \nabla q = \bff,  \\
& \nabla \cdot \bfu = 0,\ \bfx\in\Omega, \ t>0
\end{split}
\end{equation}
with an initial condition $ \bfu_0(\bfx)$, where $\beta$ is a positive model calibration parameter, and $l(\bfx)$ is a mixing length that depends on the distance to the wall, $0 \leq l(\bfx)  \rightarrow 0$ as $ \bfx \rightarrow \partial \Omega$. In the above $\bfu(\bfx,t)$ and $q(\bfx,t)$ approximate the true averages of Navier-Stokes velocity and pressure, respectively. The kinematic viscosity is denoted by $\nu$, and $\bff$ is the known body force. It is shown in \cite{RLZ19} that the eddy viscosity term $\nabla \times (l^2(\bfx) |\nabla \times \bfu| \nabla \times \bfu )$  accounts for the dissipative effect of the Reynolds stress. In addition, there is a two-dimensional computational test that indicates the non-smoothing dispersive term,  $\beta^2   \,  \nabla \times (l^2(\bfx) \nabla \times \partial_t\bfu)$,   accounts for statistical backscatter \footnote{{Statistical-backscatter refers to the movement of energy from fluctuations back to means when using ensemble averaging.}}, however there is no artificial negative viscosity.     Very recently, Berselli~\cite{MR4841282} also introduced and studied the rotational Smagorinsky model with backscatter effects.

%One main common drawback shared by many eddy viscosity models   is that model dissipation often does not balance with energy input and exceed it,  and  this may  lead to higher viscosity  (non-physical) solutions.    We prove that this is not so for the above model.  In this manuscript,   this concern is addressed  with a rigorous mathematical  analysis, supported by a three-dimensional computational evidence.

There are two common drawbacks shared by many eddy viscosity models. One is that model dissipation often exceeds the energy input, leading to non-physical solutions with higher viscosity. Another issue is the lack of a simple, universal, and effective specification of parameters, such as $\beta$ and $l(\bfx)$ in \eqref{BLM}, as they vary from case to case. In this manuscript, these two concerns are addressed through rigorous mathematical analysis, supported by computational evidence.

Herein,  we first study the existence and uniqueness of the global weak solution to the equations \eqref{BLM}, see Theorems \ref{Existencethm} and \ref{Uniquenssthm}. We show that, under mild condition on $\l(\bfx)$, the existence and uniqueness of solutions hold for the model. Another main result of this paper, \Cref{MainThm}, is that for the backscatter Baldwin-Lomax model \eqref{BLM} equipped with the Dirichlet boundary condition,  the over dissipation does not happen: the model's energy dissipation rate is consistent with its energy input rate, and that the model produces time averaged energy dissipation rates consistent with the $K41$ phenomenology given in \cite{K41}. In addition,  based on the analysis,  a range for the mixing length is proposed, $\l(\bfx) \sim \Rey^{-{1/2}}$, which is also consistent with the suggested mixing length in \cites{W06}.

\subsection{Related works} Studies of turbulence typically rely on statistical rather than pointwise descriptions, as providing a detailed account of fluid behavior in a turbulent region is impractical due to the chaotic structures spanning a wide range of length scales. One such statistical quantity is the time-averaged energy dissipation rate \cite{H72}. Based on Kolmogorov’s conventional turbulence theory at large Reynolds number, the energy dissipation rate per unit volume  $\langle \varepsilon \rangle $ should be independent of the kinematic viscosity $\nu$, see \cites{K41}. In other words, dissipation appears to exist independently of viscosity in the zero limit. By a dimensional consideration, the energy dissipation rate per unit volume scales as\footnote{ Based on the concept of energy cascade introduced by Richardson \cite{R22} in 1922,  the molecular viscosity is effective in dissipating the large scales' kinetic energy only at small scales.   Therefore,  the rate of dissipation is determined by the rate of transfer of energy from the largest eddies. The large eddies have energy of order $U^2$,  and the \say{rate} has dimensions $1/\text{time}$. The turnover time for the large eddies, i.e., the time that takes a large eddy with velocity $U$ to travel a distance $L$, is given by $\tau = L/U$. Thus the \say{rate of energy input} has dimensions $U^2/\tau = U^3/L.$}
$$\langle  \varepsilon\rangle  \simeq  C_{\varepsilon}\frac{U^3}{L},$$
where $U$ and  $L$ are  global velocity and length scales, with $C_{\varepsilon}=$  the asymptotic constant (Kolmogorov 1941). This result is fundamental to an understanding of turbulence \cite{S68}.

In the theory of turbulence, upper estimates of energy dissipation rates are useful for, in particular, overall complexities of turbulent flow simulations. It also determines the smallest persistent length scales and the dimension of any global attractor (if it exists) \cites{DFJ09, F95}. Doering and Constantin in \cite{DC92}   and Doering and Foias in \cite{DF02}  proved rigorous asymptotic upper bounds directly from the Navier-Stokes equations. Their bound is of the form  $$ \varepsilon \lesssim  \frac{U^3}{h},  \hspace{0.5cm} \text{as}  \hspace{0.1cm}\ \Rey \to \infty,  \hspace{0.5cm} \text{where} \hspace{0.2cm}  \Rey= \frac{UL}{\nu}.$$
These works have developed in many important directions,  e.g.,  \cites{AP17,AP19, AP20, W97, W00, W10, L02, L16, DR00,FS1,FS2} for the deterministic flows.   Very recently,  the effect of randomness on the dissipation rate is studied in \cite{fan20233d} and  \cite{FJP21}  when noise is added on a boundary of shear flow.   The authors in \cite{CP20} could also prove $\mathcal{O} \left( U^3 / L\right)$ lower bound for stochastically forced flows under an additional assumption of energy balance.

\subsection*{Organization of this paper} In \Cref{sec:S2}, we introduce some standard  notations and preliminaries. \Cref{sec:S3} revisits the corrected model \eqref{BLM}, and we present definitions and the setting for the analysis. Later in \Cref{sec:S4}, we discuss the existence and uniqueness results. Then in \Cref{sec:S5}, we prove the central results of these considerations, that the dissipation rate is independent of the viscosity at high Reynolds number. Numerical experiments, demonstrating and extending beyond the analytical results, are described in \Cref{sec:S6}. 

 \section{Notation \& preliminaries}\label{sec:S2}
 
 Let $\Omega \subset \mathbb{R}^3$ be a open domain bounded by $\partial \Omega$ with the volume $|\Omega|$ and $\bfx = (x_1, x_2, x_3) \in \mathbb{R}^3$. Constant $C$ represents a generic positive constant independent of $\nu, \ \Rey, $ and other model's parameters. Let $p \in [1 , \infty]$, and the Lebesgue space $\left(L^p(\Omega)\right)^3$ is the space of all measurable functions $\bfv$ on $\Omega$ for which
\begin{align*}
\|\bfv\|_{L^p} & :=\big( \int_{\Omega} |\bfv (\bfx)| ^p\, d\bfx\big)^{\frac{1}{p}} \, < \infty,  \hspace{1cm} \text{if} \hspace{0.2cm} p \in [1 , \infty),\\
\|\bfv\|_{L^{\infty}}& := \sup_{\bfx \in \Omega}  |\bfv (\bfx)| < \infty,  \hspace{2.3cm} \text{if} \hspace{0.2cm} p  = \infty.
    \end{align*}
The $ L^2$  norm and inner product will be denoted by $\|\cdot\|$ and $( \cdot ,  \cdot)$ respectively, while all other norms will be labeled with subscripts. Let $\bfV$ be a Banach space of functions defined on $\Omega$ with the associated norm $\| \cdot \|_\bfV$. We denote by $L^p(a, b; \bfV)$,  $p \in [1 , \infty]$,  the space of functions $\bfv : (a, b) \rightarrow  \bfV$ such that

\begin{align*}
\|\bfv\|_{L^p (a, b ; B)} &:=\big( \int_a^b \|\bfv(t)\|_B ^p\, dt\big)^{\frac{1}{p}} \, < \infty,  \hspace{1cm} \text{if} \hspace{0.2cm} p \in [1 , \infty),\\
\|\bfv\|_{L^{\infty} (a, b ; B)}& := \sup_{t  \in (a , b)} \|\bfv( t )\|_B < \infty,  \hspace{1.87cm} \text{if} \hspace{0.2cm} p  = \infty.
\end{align*} 
The space $H^{-1}$  is dual to $H^1$, and its norm is defined as

$$\|\bfv\|_{-1}= \sup_{\phi \in H^1}\frac{|(\bfv , \phi)|}{\|\nabla \phi\|}. $$
The space  $W_0^{1,p}(\Omega)$ consists of all functions in
$W^{1,p}$ which vanish on the boundary $\partial \Omega$ (in the sense of traces)
$$ W_0^{1,p} = \big\{ \bfv:   \hspace{0.1cm}\bfv \in  W^{1,p}(\Omega)  \hspace{0.3cm} \text{and} \hspace{0.3cm}\bfv |_{\partial \Omega}=0  \big\}.$$
Moreover
$$ W_{0, div}^{1,p} = \big\{ \bfv:   \hspace{0.1cm}\bfv \in  W_0^{1,p} (\Omega)  \hspace{0.3cm} \text{and} \hspace{0.3cm} \nabla \cdot \bfv=0 \big\}.$$

\subsection*{Some inequalities}
Let $1 \leq p \leq\infty$; we denote by $q$ the conjugate exponent,
$\frac{1}{p} + \frac{1}{q} = 1$. Assume that $f \in L^p$ and $g \in L^q$ with
$1\leq p \leq \infty$. Then $fg \in L^1$ and
\begin{equation}\label{Holder}
 \tag{H\"older inequality}
\|fg\|_{L^1} \leq \|f\|_{L^p}\, \|g\|_{L^q}
\end{equation} 
Moreover, for  any  $a, b \geq 0$ and $\lambda > 0$ we have
\begin{equation}\label{Young}
 \tag{Young inequality}
a b \leq \frac{\lambda^{-\frac{p}{q}}  \,a^p}{p} + \frac{\lambda \, b^q}{q}.
\end{equation}
There is a constant $C_{\text{P}}$ depends on $\Omega$ such that for any $\bfu \in \left(H_0^1 (\Omega)\right)^3,$
\begin{equation}\label{Poincare}
 \tag{Poincar\'e inequality}
 \| \bfu \| \leq C_{\text{P}} \| \nabla \bfu \|.
\end{equation}
In the sequel, we will make use of the classical embedding theorems for Sobolev spaces in $d=3$, 
\begin{equation}\label{Embedding}
 \tag{Sobolev embedding}
 \begin{split}
&W^{1,p}(\Omega)  \hookrightarrow L^6 (\Omega),  \quad \forall \, p \in [2, \infty).\\
 &W^{1,3}(\Omega) \hookrightarrow  L^p (\Omega), \quad \forall \, p \in [1, \infty).
 \end{split}
\end{equation}

Before presenting the main results, we report the following Gr\"onwall lemmas which were first mentioned in \cite{G19} by T. Hakon Gr\"onwall. 

\begin{prop}[\textbf{Gr\"onwall's lemma: Integral form}]\label{Gronwall}
Let $T \in \mathbb{R}^+ \cup \{\infty\}$ and $\alpha , \psi \in L^{\infty}(0, T)$ with  $\lambda \in L^1(0,T),  \lambda(t) \geq 0$,  for almost all $t\in [0, T]$. Then
$$\alpha(t) \leq \psi(t) + \int_0^t \lambda(s)\, \alpha(s)\, ds  \hspace{1cm} \text{a.e. in } [0,T],$$
implies for almost all $t \in [0,T]$

$$ \alpha(t) \leq \psi(t) + \int_0^t e^{\gamma(t) - \gamma(s)}\, \lambda(s)\, \psi(s)\, ds,$$
where $\gamma (t):= \int_0^t \lambda(\tau)\, d \tau$. If $\psi \in W^{1,1}(0,T)$, it follows

$$\alpha(t) \leq e^{\gamma(t)}\, \left(\psi(0)+ \int_0^t e^{-\gamma(s)}\, \psi'(s)\, ds\right).$$
Moreover, if $\psi$ is a monotonically increasing, continuous function, it holds,

$$\alpha(t) \leq e^{\gamma(t)}\, \psi(t).$$
\end{prop}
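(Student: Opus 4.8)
The plan is to reduce everything to a linear first-order differential inequality for the ``memory'' term $w(t) := \int_0^t \lambda(s)\alpha(s)\,ds$ and then integrate it against the natural integrating factor. First fix a finite $T' < T$ and work on $[0,T']$: since $\alpha \in L^\infty(0,T)$ and $\lambda \in L^1(0,T)$, the product $\lambda\alpha$ lies in $L^1(0,T')$, so $w$ is absolutely continuous on $[0,T']$ with $w(0)=0$ and $w'(t) = \lambda(t)\alpha(t)$ for a.e.\ $t$. The hypothesis $\alpha(t) \le \psi(t) + w(t)$ together with $\lambda \ge 0$ then gives the a.e.\ differential inequality $w'(t) - \lambda(t) w(t) = \lambda(t)\bigl(\alpha(t) - w(t)\bigr) \le \lambda(t)\psi(t)$.

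Next I would multiply by $e^{-\gamma(t)}$, where $\gamma(t) = \int_0^t \lambda(\tau)\,d\tau$ is itself absolutely continuous and nondecreasing. Since $t \mapsto e^{-\gamma(t)} w(t)$ is a product of absolutely continuous functions, the product rule holds a.e.\ and $\tfrac{d}{dt}\bigl(e^{-\gamma(t)} w(t)\bigr) = e^{-\gamma(t)}\bigl(w'(t) - \lambda(t) w(t)\bigr) \le e^{-\gamma(t)}\lambda(t)\psi(t)$ a.e. Integrating from $0$ to $t$ and using $w(0) = \gamma(0) = 0$ yields $e^{-\gamma(t)} w(t) \le \int_0^t e^{-\gamma(s)}\lambda(s)\psi(s)\,ds$, that is $w(t) \le \int_0^t e^{\gamma(t)-\gamma(s)}\lambda(s)\psi(s)\,ds$. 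Substituting back into $\alpha(t) \le \psi(t) + w(t)$ proves the first conclusion, and since $T' < T$ was arbitrary it holds a.e.\ on $[0,T)$.

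For the second conclusion I would use the antiderivative identity $\lambda(s) e^{\gamma(t)-\gamma(s)} = -\tfrac{d}{ds} e^{\gamma(t)-\gamma(s)}$: when $\psi \in W^{1,1}(0,T)$, integrating by parts in $\int_0^t e^{\gamma(t)-\gamma(s)}\lambda(s)\psi(s)\,ds$ produces $-\psi(t) + e^{\gamma(t)}\psi(0) + \int_0^t e^{\gamma(t)-\gamma(s)}\psi'(s)\,ds$, and adding $\psi(t)$ cancels the first term, giving $\alpha(t) \le e^{\gamma(t)}\bigl(\psi(0) + \int_0^t e^{-\gamma(s)}\psi'(s)\,ds\bigr)$. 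For the third conclusion, monotonicity of $\psi$ gives $\psi(s) \le \psi(t)$ for $s \le t$, so the first conclusion yields $\alpha(t) \le \psi(t)\bigl(1 + \int_0^t e^{\gamma(t)-\gamma(s)}\lambda(s)\,ds\bigr)$; the same antiderivative identity gives $\int_0^t e^{\gamma(t)-\gamma(s)}\lambda(s)\,ds = e^{\gamma(t)} - 1$, hence $\alpha(t) \le \psi(t) e^{\gamma(t)}$.

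I expect the only delicate points — not genuine obstacles — to be the measure-theoretic bookkeeping: verifying that $w$, and hence $e^{-\gamma} w$, is absolutely continuous so that the fundamental theorem of calculus and the a.e.\ product rule are legitimate, that the a.e.\ differential inequality may nonetheless be integrated, and that the argument localizes cleanly to finite subintervals to handle $T = \infty$. The continuity hypothesis on $\psi$ in the last part is not in fact needed beyond ensuring measurability and local boundedness, so I would either invoke only monotonicity or remark on this.
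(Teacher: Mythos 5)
Your proof is correct. Note, however, that the paper does not actually prove this proposition: it is stated as a classical fact and attributed to Gr\"onwall's 1919 note (reference \cite{G19}), so there is no in-paper argument to compare against. Your route---isolating the memory term $w(t)=\int_0^t\lambda(s)\alpha(s)\,ds$, observing $w'-\lambda w=\lambda(\alpha-w)\le\lambda\psi$ a.e., and integrating against the factor $e^{-\gamma}$---is the standard derivation, and the steps you flag as delicate are all legitimate: $\lambda\alpha\in L^1$ because $\alpha\in L^\infty$ and $\lambda\in L^1$, so $w$ is absolutely continuous, $e^{-\gamma}$ is absolutely continuous (a locally Lipschitz function of the AC, bounded function $\gamma$), the product of two AC functions on a compact interval is AC, hence the a.e.\ product rule and the fundamental theorem of calculus apply, and the localization to $[0,T']$ disposes of $T=\infty$. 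The two corollaries follow exactly as you say: integration by parts (valid for $W^{1,1}$, i.e.\ AC, $\psi$) converts the first bound into the second with equality of the right-hand sides, and for the third the monotonicity $\psi(s)\le\psi(t)$ together with the identity $\int_0^t e^{\gamma(t)-\gamma(s)}\lambda(s)\,ds=e^{\gamma(t)}-1$ gives $\alpha(t)\le e^{\gamma(t)}\psi(t)$. You are also right that continuity of $\psi$ plays no essential role in the last part; and since the weight $e^{\gamma(t)-\gamma(s)}\lambda(s)$ is nonnegative, no sign condition on $\psi$ is needed either.
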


\begin{prop}[\textbf{Gronwall's lemma in differential form}]\label{GronwallDiff} Let $T \in \mathbb{R}^+ \cup \{\infty\}$,  $\alpha \in W^{1,1} (0, T)$ and $\psi,  \, \lambda  \in L^1 (0, T)$. Then

$$ \alpha'(t) \leq \psi(t) + \lambda (t)\, \alpha(t) \hspace{1cm} \text{a.e.  in } [0, T]$$
implies for almost all $t \in [0 , T]$

$$ \alpha(t) \leq \alpha(0)\, e^{\left[\int_0^t\, \lambda(\tau)\, d\tau\right] } + \int_0^t \, \psi(s) \, e^{\left[ \int_s^t\lambda(\tau)\, d \tau\right]}\, ds.$$
\end{prop}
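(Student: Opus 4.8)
The plan is to use the classical integrating-factor argument. First I would set $\gamma(t) := \int_0^t \lambda(\tau)\, d\tau$; since $\lambda \in L^1(0,T)$, the function $\gamma$ is absolutely continuous and bounded on every bounded subinterval of $[0,T)$, with $\gamma(0)=0$ and $\gamma'=\lambda$ a.e. Consequently $t \mapsto e^{-\gamma(t)}$ is absolutely continuous and bounded, with $\tfrac{d}{dt}e^{-\gamma(t)} = -\lambda(t)\,e^{-\gamma(t)}$ a.e.

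Next I would introduce $\beta(t) := \alpha(t)\, e^{-\gamma(t)}$. Because $\alpha \in W^{1,1}(0,T)$ has an absolutely continuous representative and $e^{-\gamma}$ is absolutely continuous and bounded, the product $\beta$ is absolutely continuous, hence lies in $W^{1,1}$, and the product rule holds almost everywhere:
$$\beta'(t) = \alpha'(t)\, e^{-\gamma(t)} - \alpha(t)\,\lambda(t)\, e^{-\gamma(t)} = e^{-\gamma(t)}\big(\alpha'(t) - \lambda(t)\,\alpha(t)\big).$$
Invoking the hypothesis $\alpha'(t) \le \psi(t) + \lambda(t)\,\alpha(t)$ a.e.\ together with the positivity of $e^{-\gamma(t)}$ gives $\beta'(t) \le e^{-\gamma(t)}\,\psi(t)$ a.e.\ in $[0,T]$.

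Then I would integrate this pointwise a.e.\ inequality over $(0,t)$ using the fundamental theorem of calculus for absolutely continuous functions, obtaining $\beta(t) - \beta(0) \le \int_0^t e^{-\gamma(s)}\,\psi(s)\, ds$. Since $\gamma(0)=0$ we have $\beta(0)=\alpha(0)$, so $\alpha(t)\, e^{-\gamma(t)} \le \alpha(0) + \int_0^t e^{-\gamma(s)}\,\psi(s)\, ds$; multiplying through by $e^{\gamma(t)}>0$ and using $e^{\gamma(t)}e^{-\gamma(s)} = e^{\int_s^t \lambda(\tau)\, d\tau}$ yields the asserted bound. An equivalent route is to integrate the differential inequality first to reach $\alpha(t) \le \widetilde\psi(t) + \int_0^t \lambda(s)\,\alpha(s)\, ds$ with $\widetilde\psi(t) := \alpha(0) + \int_0^t \psi(s)\, ds \in W^{1,1}(0,T)$, and then apply \Cref{Gronwall} (the $\psi \in W^{1,1}$ case), since $\widetilde\psi(0)=\alpha(0)$ and $\widetilde\psi'=\psi$; both routes deliver the same conclusion.

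The only real care needed — the "obstacle," such as it is — is the measure-theoretic bookkeeping: working with absolutely continuous representatives so that the product rule and the fundamental theorem of calculus are legitimate for $W^{1,1}$ functions, checking that $e^{-\gamma}$ inherits absolute continuity and boundedness from $\gamma$, and noting that integration preserves an inequality that holds almost everywhere. When $T=\infty$ one simply runs the argument on each finite $(0,t)$, where every quantity is finite; no new estimates are required.
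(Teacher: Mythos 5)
Your integrating-factor argument is correct and complete; note that the paper itself states this proposition without proof, merely citing Gr\"onwall's 1919 note, so there is no in-paper argument to compare against. Your handling of the measure-theoretic points (absolute continuity of $e^{-\gamma}$, the product rule for $W^{1,1}$ functions, integrating an a.e.\ inequality) is exactly the care the standard proof requires, and the primary route needs no sign condition on $\lambda$. One small caveat: your ``equivalent route'' via \Cref{Gronwall} is not quite equivalent as stated, since that proposition assumes $\lambda(t)\geq 0$ and $\alpha,\psi\in L^{\infty}(0,T)$, neither of which is hypothesized here; the direct integrating-factor computation is therefore the right proof to keep.
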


Lastly, we would like to highlight an important property of the eddy viscosity term that will be valuable in the subsequent analysis. The proof of this property can be found in \cite{RLZ19}.

\begin{prop}[\textbf{Strong monotonicity}]\label{Monotonicity}
For all $\bfu, \bfu' \in W^{1,3}(\Omega)$, there is a constant $\tilde{c}>0$ such that 

$$\left( l^2(\bfx) |\nabla \times \bfu| \nabla \times \bfu - l^2(\bfx) |\nabla \times \bfu'| \nabla \times \bfu'\,  , \, \nabla \times (\bfu - \bfu')\right)  \geq  \tilde{c} \,  \|\l^{\frac{3}{2}}(\bfx)\, \nabla \times (\bfu - \bfu')\|_{L^3}^3. $$
\end{prop}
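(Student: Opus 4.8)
The plan is to reduce the assertion to a purely algebraic pointwise inequality for vectors in $\mathbb{R}^3$ and then integrate it against the nonnegative weight $l^2(\bfx)$. The key ingredient is the $p=3$ instance of the classical strong monotonicity of the $p$-Laplace nonlinearity $\Phi(v):=|v|^{p-2}v$, namely that there is a constant $c_0>0$ with
\[
\bigl(|a|\,a-|b|\,b\bigr)\cdot(a-b)\ \ge\ c_0\,|a-b|^3\qquad\text{for all }a,b\in\mathbb{R}^3 .
\]
Granting this, I would apply it with $a=(\nabla\times\bfu)(\bfx)$ and $b=(\nabla\times\bfu')(\bfx)$ — both in $\bigl(L^3(\Omega)\bigr)^3$ since $\bfu,\bfu'\in W^{1,3}(\Omega)$ — multiply by $l^2(\bfx)\ge0$, and integrate over $\Omega$; all integrals are finite by \eqref{Holder} together with $l\in L^\infty$, and the left-hand side then dominates $c_0\int_\Omega l^2(\bfx)\,|\nabla\times(\bfu-\bfu')|^3\,d\bfx$. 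Finally, since $l$ is bounded, $0\le l(\bfx)\le L:=\|l\|_{L^\infty}<\infty$ (indeed $l(\bfx)\to 0$ as $\bfx\to\partial\Omega$), we have $l^2(\bfx)\ge L^{-5/2}l^{9/2}(\bfx)=L^{-5/2}\bigl(l^{3/2}(\bfx)\bigr)^3$ pointwise, so the bound becomes $c_0L^{-5/2}\int_\Omega\bigl(l^{3/2}(\bfx)\,|\nabla\times(\bfu-\bfu')|\bigr)^3\,d\bfx=c_0L^{-5/2}\,\|l^{3/2}\nabla\times(\bfu-\bfu')\|_{L^3}^3$, which is the claim with $\tilde c:=c_0L^{-5/2}>0$ (and $\tilde c=c_0$ if $l$ is normalized so that $l\le 1$).

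To prove the pointwise inequality I would integrate along the segment $z(t):=(1-t)b+ta$, $t\in[0,1]$, so that $z'(t)=a-b$. Since $\Phi(v)=|v|v$ is $C^1$ on $\mathbb{R}^3$,
\[
\bigl(|a|a-|b|b\bigr)\cdot(a-b)=\int_0^1\frac{d}{dt}\Phi(z(t))\cdot(a-b)\,dt
=\int_0^1\Bigl(|z(t)|\,|a-b|^2+\frac{\bigl(z(t)\cdot(a-b)\bigr)^2}{|z(t)|}\Bigr)\,dt\ \ge\ |a-b|^2\int_0^1|z(t)|\,dt ,
\]
where the first term comes from differentiating the factor $z$ and the second from differentiating the factor $|z|$, both being nonnegative. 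By the reverse triangle inequality $|z(t)|=|b+t(a-b)|\ge\bigl|\,|b|-t|a-b|\,\bigr|$, and an elementary one-variable estimate gives $\int_0^1\bigl|\,|b|-t|a-b|\,\bigr|\,dt\ge\tfrac14|a-b|$; hence the pointwise inequality holds with $c_0=\tfrac14$.

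The main — indeed, essentially the only — non-routine point is the pointwise inequality of the first paragraph, and within it the scalar bound $\int_0^1\bigl|\,|b|-t|a-b|\,\bigr|\,dt\ge\tfrac14|a-b|$, which one verifies by splitting the integral at $t=|b|/|a-b|$ when that value lies in $[0,1]$ and minimizing the result over $|b|$. Everything else is bookkeeping: multiplying a pointwise inequality by the nonnegative weight $l^2$, using \eqref{Holder} and $l\in L^\infty$ for integrability, and comparing two powers of the bounded function $l$. Alternatively, one may simply invoke the strong monotonicity of $v\mapsto|v|^{p-2}v$ for $p\ge 2$ from the $p$-Laplacian literature, as is done in \cite{RLZ19}.
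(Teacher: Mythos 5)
The paper does not actually prove this proposition itself --- it defers to \cite{RLZ19} --- and your argument is exactly the standard one behind that citation: the pointwise strong monotonicity of $v\mapsto|v|^{p-2}v$ at $p=3$, obtained by integrating $\tfrac{d}{dt}\Phi(z(t))\cdot(a-b)$ along the segment and discarding the nonnegative term $\bigl(z\cdot(a-b)\bigr)^2/|z|$, then integrated against the nonnegative weight $l^2(\bfx)$. Your proof is correct, including the elementary scalar bound $\int_0^1\bigl|\,|b|-t|a-b|\,\bigr|\,dt\ge\tfrac14|a-b|$ (the minimum $d/4$ is attained at $|b|=|a-b|/2$); the only point worth flagging is that the exponent $l^{3/2}$ in the statement yields $\int_\Omega l^{9/2}|\nabla\times(\bfu-\bfu')|^3\,d\bfx$ rather than the natural quantity $\int_\Omega l^{2}|\nabla\times(\bfu-\bfu')|^3\,d\bfx=\|l^{2/3}\nabla\times(\bfu-\bfu')\|_{L^3}^3$ appearing elsewhere in the paper, which is why your final comparison step via $\|l\|_{L^\infty}$ is needed --- harmless, but it makes $\tilde c$ depend on $\|l\|_{L^\infty}$, and with the exponent $2/3$ one would get $\tilde c=c_0$ directly.
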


\section{The rotational backscatter model}\label{sec:S3}
One of the most used approaches in simulating turbulent flows is to model the ensemble-averaged Navier–Stokes equations by eddy viscosity,  and then solve the discretized result.  To begin, consider the incompressible Navier–Stokes equations (NSE) driven by a known body force $\bff (\bfx, t)$  and starting from an initial condition  $\bfv(\bfx , 0) = \bfv_0(\bfx) $
\begin{equation}\label{NSE}
\begin{split}
\partial_t \bfv+     \nabla \cdot (\bfv \otimes \bfv) - &  \nu \, \Delta \bfv +   \nabla p = \bff,  \hspace{0.2cm}\text{and} \hspace{0.2cm} \nabla \cdot \bfv = 0 \hspace{0.2cm}\text{in} \,  \Omega,\\
& \bfv = 0 \hspace{0.2cm} \text{on}  \hspace{0.15cm} \partial \Omega,  \hspace{0.45cm} \text{and}   \hspace{0.15cm} \int_{\Omega} p \, d\bfx  =0.
\end{split}
\end{equation}
Here, $ \Omega \subset \mathbb{R}^3$ is a bounded polyhedral domain,  $\bfv: \Omega \times [0; T] \rightarrow  \mathbb{R}^3$ is the  fluid velocity   and  $p : \Omega \times  (0; T] \rightarrow \mathbb{R} $ is the  fluid pressure. Due to measurement imprecision in input data (e.g., initial velocity), we consider an ensemble of $J$ Navier–Stokes equations. Let $\bfv(\bfx, t; \omega_j),  \, p(\bfx, t; \omega_j)$ be associated solutions to the NSE (\ref{NSE}) given an ensemble of initial conditions\footnote{The  sampled values $\omega_j$ can also incorporate variation in flow parameters not represented by the model such as variation of viscosity with temperature in an isothermal model.}
$$ \bfv(\bfx, 0; \omega_j)  = \bfv_0(\bfx; \omega_j), \hspace{1cm} j = 1, ..., J.$$
Then the instantaneous velocity field and pressure  are  decomposed into the mean and fluctuating components
$$\bfv  = \langle \bfv\rangle + \bfv',  \hspace{0.5cm} \text{and} \hspace{0.5cm} p  = \langle p \rangle + p',$$
where the ensemble means\footnote{The mean operator can be extended to any linear statistical filter that satisfies  the Reynolds rules  $$\langle \langle \phi \rangle \rangle =  \langle \phi \rangle \hspace{1cm} \text{and} \hspace{1cm} \langle \partial \phi \rangle = \partial \langle \phi \rangle $$ for any differential operator $\partial$.} are
$$
\langle \bfv\rangle \, (\bfx , t) := \frac{1}{J} \, \sum_{j=1}^J \bfv(\bfx, t; \omega_j),  \hspace{0.5cm} \text{and} \hspace{0.5cm}\langle p\rangle \, (\bfx , t)  := \frac{1}{J} \, \sum_{j=1}^J p(\bfx, t; \omega_j),$$
and fluctuations about the mean are given by
$$
\bfv'(\bfx, t; \omega_j) :=  \bfv(\bfx, t; \omega_j)  - \langle \bfv\rangle \, (\bfx , t), \hspace{0.5cm} \text{and} \hspace{0.5cm}  p'(\bfx, t; \omega_j) :=  p(\bfx, t; \omega_j)  - \langle p\rangle \, (\bfx , t).
$$

After introducing  the above decomposition into the equations \eqref{NSE}, one can obtain the ensemble averaging equations, which form a \textit{non-closed system}
\begin{equation}\label{Ens-NSE}
\partial_t \langle\bfv\rangle+    \langle\bfv\rangle \cdot \nabla \langle\bfv\rangle-  \nu \, \Delta \langle\bfv\rangle - \nabla \cdot \mathbf{R} \, (\bfv , \bfv) +  \nabla \langle p \rangle = \bff,  \hspace{0.2cm}\text{and} \hspace{0.2cm} \nabla \cdot \langle\bfu\rangle = 0 \hspace{0.2cm}\text{in} \,  \Omega,\\
\end{equation}
where the Reynolds stress $\mathbf{R} \, (\bfv, \bfv)$ is
$$ \mathbf{R} \, (\bfv, \bfv) := \langle\bfv\rangle \otimes \langle\bfv\rangle  -   \langle\bfv \otimes \bfv\rangle = -  \langle\bfv' \otimes \bfv'\rangle.$$
By invoking  the Boussinesq conjecture\footnote{Turbulent fluctuations have a dissipative effect on the mean flow \cite{B77}.} and   eddy viscosity hypothesis\footnote{This dissipativity aligns with the gradient  or the deformation tensor\cite{R57}.}, the Reynolds stress is modeled by 
$$- \nabla \cdot \mathbf{R} \, (\bfv, \bfv) \sim - \nabla \cdot \left( \nu_T \left(  \langle\bfv\rangle \right) \, \nabla \left(\langle\bfv\rangle \right) \right) + \text{terms incorporated into the new pressure},$$
and the standard eddy viscosity (EV) model is given by
\begin{equation}\label{EV}
\begin{split}
\partial_t \bfu+  \nabla \cdot (\bfu \otimes \bfu) - &  \nu  \, \Delta \bfu  - \nabla \cdot \big( \nu_T (  \bfu ) \, \nabla \bfu  \big) +  \nabla q = \bff,\\
& \nabla \cdot \bfu = 0.
\end{split}
\end{equation}
Since \eqref{EV} is a model, its solution is no longer the exact ensemble average, and solutions $\bfu(\bfx , t)$  and  $q(\bfx, t)$ are intended to approximate the true average velocity $\langle\bfv\rangle$ and pressure $\langle p \rangle$  receptively. 

The turbulent viscosity coefficient $\nu_T (  \langle\bfu\rangle )  $  should be calibrated by fitting it to flow data, and many parameterizations of eddy viscosity models are known, e.g.    \cites{BIL06, CL14, J06, LL02, MP94, V04}. In the simplified Baldwin-Lomax model \cite{BLM78}, the  eddy viscosity coefficient is calibrated as
\begin{equation} \label{BL-EV}
\nu_T (  \bfu ) =  l^2(\bfx) |\nabla \times \bfu|,
\end{equation} 
where $ l(\bfx)$ is a multiple of the distance from the boundary. The existence of weak solutions for the Baldwin-Lomax model in the steady case has recently been proved in \cite{BB20}.   Rotational LES models, in both steady and unsteady cases, have been recently developed and studied in~\cite{MR4841282, MR4568950}.  By looking into the energy equation, since   $\nu_T > 0$ in the above calibration (\ref{BL-EV}), the the eddy viscosity term  $ - \nabla \cdot \big( \nu_T (  \bfu ) \, \nabla \bfu  \big)$ in (\ref{EV}) can only characterize the dissipation effects of the Reynolds stress and is unable to represent the backscatter.  Like many EV models \cites{LN92, S07, S68, JL16},  the basic  Baldwin-Lomax model has difficulty with complex turbulence not at statistical equilibrium. It performs poorly in simulating backscatter and is unable to model effects such as separations and wakes. 

After expressing the eddy viscosity term in rotational (curl-curl) form, the authors in \cite{RLZ19} adopted the Baldwin-Lomax model to non-equilibrium turbulence, and introduced the backscatter rotational model \eqref{BLM}.  The derivation of the model is based on the variance evolution equation, and the approach suggested in \cite{JL16} (for more details see \cite{RLZ19} Section 3). It is proven that the effects of terms that modeled the Reynolds stress are dissipative in a long-time averaging sense
$$\liminf_{T\rightarrow \infty} \frac{1}{T} \, \int_0^T \, \int_\Omega \left( \beta^2  l^2(\bfx) \nabla \times \bfu_t \, \cdot \,  \nabla \times  \bfu    +  l^2(\bfx) | \nabla \times \bfu|^3 \right) \, d\bfx \, dt \geq 0,$$
while there are numerical evidences (see Figures 3, 4, 5, and 6 in \cite{RLZ19}) that indicate the backscatter occurs and energy is transferred from fluctuations to the mean at some times.

%Assuming that $l(\bfx) =  \mathcal{O}\left(\sqrt{d(\bfx, \partial \omega)}\right)$  near the boundary and $l(\bfx)$ be strictly positive inside the domain,  and if  $\bff \in L^2\left(0,  T ; L^2(\Omega)^3 \right)$ and $\bfu_0 \in W^{1,3}_{0, \delta} (\Omega)$, then the corrected Baldwin-Lomax model \eqref{BLM} equipped with the Dirichlet boundary conditions  has a unique regular weak solution \cite{BLN21}.

Existence and uniqueness of the weak solution are discussed in the following sections, and after formally taking the inner product of model \eqref{BLM} with $\bfu$, one can show that the  solution satisfies the energy equality 
\begin{equation}\label{Energy=}
\frac{1}{2} \frac{d}{dt}  \, \left( \,  \|\bfu\|^2 + \beta^2 \| \l(\bfx) \, \nabla \times \bfu\|^2\, \right) + \nu \| \nabla \bfu \|^2 + \int_{\Omega} l^2(\bfx) |\nabla \times \bfu|^3 \, d\bfx = (\bff , \bfu).
\end{equation}

\begin{re}
After dividing the both sides of \eqref{Energy=} by the volume of domain $|\Omega|$, we can identify the following quantities
\begin{itemize}
\item Model kinetic energy of mean flow per unit volume
$$\mbox{MKE}:=  \frac{1}{2} \frac{1}{|\Omega|}\,  \frac{d}{dt}  \, \left( \,  \|\bfu\|^2 + \beta^2 \| \l(\bfx) \, \nabla \times \bfu\|^2\, \right), $$
\item Rate of energy dissipation of mean flow per unit volume
$$ \varepsilon(t):= \frac{\nu}{|\Omega|}\,  \| \nabla \bfu \|^2 + \frac{1}{|\Omega|}\,  \int_{\Omega} l^2(\bfx) |\nabla \times \bfu|^3 \, d\bfx, $$
\item Rate of energy input to mean flow per unit volume
$$\frac{1}{|\Omega|}\, (\bff , \bfu).$$
\end{itemize}

\end{re}
  \begin{df}
 The dissipation quantities (per unit volume) are given  as
  $$\varepsilon_0 :=  \frac{1}{|\Omega|} \int_{\Omega} \nu  |\nabla  \bfu|^2 d \bfx =   \frac{1}{|\Omega|} \nu \|\nabla \bfu\|^2, $$
 
 $$\varepsilon_M := \frac{1}{|\Omega|}  \int_{\Omega} l^2(\bfx) |\nabla \times \bfu|^3 d\bfx.$$
 The long time average of a function $\phi (t)$ is defined
  $$\langle \phi \rangle :=  \limsup_{T \rightarrow \infty}  \frac{1}{T} \, \int_{0}^{T} \, \phi(t)\, dt. $$
The time-averaged energy dissipation rate (per volume) for model (\ref{BLM}) includes dissipation due to the viscous forces and turbulent diffusion. It is given by
\begin{equation}\label{EDR}
\langle \varepsilon \rangle :=  \limsup_{T \rightarrow \infty}  \frac{1}{T} \, \int_{0}^{T} \, \varepsilon_0 + \varepsilon_M\, dt. 
\end{equation}
  \end{df}

\section{On the existence and uniqueness  of weak solutions}\label{sec:S4}

In this Section, we prove the existence and uniqueness of (weak)  solutions to \eqref{BLM}. We assume that $\bfu$ is a sufficiently regular solution of the equations \eqref{BLM},  and then establish a priori estimates on $\bfu$ in the subsequent lemmas. In this context, our approach involves working in a formal manner. However, it is important to highlight that the formal derivation, along with the accompanying a priori estimates, can be rigorously established by employing a Galerkin approximation. Given the standard nature of this approach, which holds true for both the $2D$ and $3D$ Navier-Stokes (NS) system, as well as other turbulence models like the Smagorinsky \cite{S63} and Ladyzhynskaya model \cite{L63},  we have chosen to omit the intricate details at this juncture. Instead, we kindly refer readers seeking more comprehensive discussions to the following references: \cites{MNR01, T77}, as well as the supplementary references contained within.

\begin{AS}
Throughout the remainder of the manuscript, we assume the following regularity conditions on the external force $\bff$, initial condition $\bfu_0$, and the model parameter $\l(\bfx)$, 

\begin{equation}\label{Assumption}
\bfu_0 \in W_{0, div}^{1,3},  \hspace{0.5cm}  \bff \in L^2(0,T; L^2(\Omega)),   \hspace{0.5cm}  0 < \ell_0 \leq \l(\bfx) \leq \ell_{max}.
\end{equation}
\end{AS}

\begin{re}
  In the framework of rotational turbulence models, as in~\cite{MR4841282, MR4568950, MR4065455}, the mixing length \( \l(\mathbf{x}) \) is typically assumed to behave like the distance to the boundary; that is, \( 0 \leq \l(\mathbf{x}) \to 0 \) as \( \mathbf{x} \to \partial\Omega \) (see also~\cite{vandriest1956} for the behavior of turbulent flow near walls). This assumption is sufficient to prove the existence of solutions. However, to establish uniqueness, we impose a non-physical, artificial condition: \( 0 < \ell_0 \leq l(\mathbf{x}) \leq \ell_{\max} \), which enables us to obtain higher regularity. Removing this lower-bound restriction remains an interesting open problem.
\end{re}

Before proving the main theorems, we need to prove boundedness of the kinetic energy and that $\langle \varepsilon \rangle$ is well-defined in the following proposition. 

 \begin{prop}\label{prop1}
Under the above regularity assumptions \eqref{Assumption}, the kinetic energy and the time-averaged energy dissipation of the solution to (\ref{BLM})  are uniformly bounded in time. Moreover
 \begin{equation}\label{Bnd}
  \bfu \hspace{0.2cm} \text{and} \hspace{0.2cm} l (\bfx) \nabla \times \bfu \in L^\infty( 0 , \infty ; L^2).
   \end{equation}
 \end{prop}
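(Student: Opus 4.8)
The plan is to test the momentum equation \eqref{BLM} against the solution $\bfu$ itself (rigorously, against the finite-dimensional Galerkin approximation $\bfu^N$, then pass to the limit), exploit the cancellation of the nonlinear convection and pressure terms, and arrive at the energy equality \eqref{Energy=}. First I would observe that $(\nabla\cdot(\bfu\otimes\bfu),\bfu)=0$ for divergence-free $\bfu$ vanishing on $\partial\Omega$, and $(\nabla q,\bfu)=0$ likewise; the dispersive backscatter term $\beta^2\nabla\times(l^2(\bfx)\nabla\times\partial_t\bfu)$, when paired with $\bfu$ and integrated by parts, contributes $\beta^2(l^2(\bfx)\nabla\times\partial_t\bfu,\nabla\times\bfu)=\tfrac{\beta^2}{2}\tfrac{d}{dt}\|l(\bfx)\nabla\times\bfu\|^2$, while the eddy viscosity term yields the nonnegative quantity $\int_\Omega l^2(\bfx)|\nabla\times\bfu|^3\,d\bfx = \varepsilon_M|\Omega|$. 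This gives \eqref{Energy=}. Then I would bound the right-hand side by Cauchy–Schwarz, the \ref{Poincare} inequality, and the \ref{Young} inequality: $(\bff,\bfu)\le\|\bff\|_{-1}\|\nabla\bfu\|\le C_{\text{P}}\|\bff\|\,\|\nabla\bfu\|\le\tfrac{\nu}{2}\|\nabla\bfu\|^2 + \tfrac{C_{\text{P}}^2}{2\nu}\|\bff\|^2$, absorbing half the viscous term on the left.

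Setting $E(t):=\tfrac12\big(\|\bfu(t)\|^2+\beta^2\|l(\bfx)\nabla\times\bfu(t)\|^2\big)$, the resulting differential inequality is
\begin{equation*}
\frac{d}{dt}E(t) + \frac{\nu}{2}\|\nabla\bfu\|^2 + \varepsilon_M|\Omega| \le \frac{C_{\text{P}}^2}{2\nu}\|\bff(t)\|^2.
\end{equation*}
Since $\|\bfu\|\le C_{\text{P}}\|\nabla\bfu\|$, I can further bound $E(t)\le \tfrac12(C_{\text{P}}^2 + \beta^2\ell_{\max}^2/\,?\,)\|\nabla\bfu\|^2$; more cleanly, drop the curl term in $E$ when needed, use Poincaré to get $-\tfrac{\nu}{2}\|\nabla\bfu\|^2\le-\tfrac{\nu}{2C_{\text{P}}^2}\|\bfu\|^2$, and then apply \Cref{GronwallDiff} (differential Gr\"onwall) to $\tfrac12\|\bfu\|^2 + \tfrac{\beta^2}{2}\|l(\bfx)\nabla\times\bfu\|^2$, using that $\bff\in L^2(0,T;L^2)$ extends to $L^2_{\mathrm{loc}}(0,\infty;L^2)$ with controlled growth, or — assuming $\bff$ has bounded $L^2$-in-time norm on unit windows — conclude $E\in L^\infty(0,\infty)$. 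This uniform-in-time bound on $E(t)$ gives \eqref{Bnd}: $\bfu\in L^\infty(0,\infty;L^2)$ and, since $\ell_0\le l(\bfx)$ or simply directly, $l(\bfx)\nabla\times\bfu\in L^\infty(0,\infty;L^2)$.

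For the time-averaged dissipation, I would integrate the energy inequality over $(0,T)$: $E(T) + \int_0^T\big(\tfrac{\nu}{2}\|\nabla\bfu\|^2 + \varepsilon_M|\Omega|\big)dt \le E(0) + \tfrac{C_{\text{P}}^2}{2\nu}\int_0^T\|\bff\|^2 dt$. Dividing by $T|\Omega|$, using $E(T)\ge0$ and $\varepsilon_0 = \tfrac{\nu}{|\Omega|}\|\nabla\bfu\|^2$, one gets $\tfrac1T\int_0^T(\tfrac12\varepsilon_0+\varepsilon_M)dt \le \tfrac{E(0)}{T|\Omega|} + \tfrac{C_{\text{P}}^2}{2\nu|\Omega|T}\int_0^T\|\bff\|^2dt$, so taking $\limsup_{T\to\infty}$ shows $\langle\varepsilon\rangle$ as defined in \eqref{EDR} is finite (well-defined) provided $\tfrac1T\int_0^T\|\bff\|^2dt$ stays bounded. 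The main obstacle I anticipate is the bookkeeping on $\bff$: the stated assumption $\bff\in L^2(0,T;L^2)$ is only finite-horizon, so to get genuinely uniform-in-time bounds and a well-defined $\limsup$ one needs either an implicit standing hypothesis that the relevant running averages of $\|\bff\|^2$ are bounded, or one restricts the uniform bound to the given interval $[0,T]$ and reads \eqref{Bnd} accordingly. Handling the dispersive term's sign and the justification that the formal energy equality survives the Galerkin limit (the curl-curl time-derivative term must be treated with care, but monotonicity from \Cref{Monotonicity} and weak lower semicontinuity of norms suffice) are routine but worth a sentence.
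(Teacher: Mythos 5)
Your proposal follows essentially the same route as the paper: start from the energy equality \eqref{Energy=}, control $(\bff,\bfu)$ by Cauchy--Schwarz/Young so as to absorb part of $\nu\|\nabla\bfu\|^2$, use Poincar\'e and Gr\"onwall in differential form for the uniform-in-time bound, and then integrate over $(0,T)$, divide by $T$, and take the limit superior to see that $\langle\varepsilon\rangle$ is well defined. The one point you leave open (the ``$?$'') is closed exactly as you suspect: since $\bfu$ is divergence-free and vanishes on $\partial\Omega$ one has $\|l(\bfx)\,\nabla\times\bfu\|^2\le\ell_{\max}\|\nabla\times\bfu\|^2\le C\|\nabla\bfu\|^2$, so the \emph{whole} quantity $E(t)$ is absorbed into the viscous term, yielding the decay rate $\alpha=\min\{\nu/(2C_{\text{P}}),\,\nu/(2\beta^2\ell_{\max})\}$ used in the paper; your alternative of ``dropping the curl term'' would not suffice, because $\|l(\bfx)\,\nabla\times\bfu\|$ is itself one of the quantities that \eqref{Bnd} requires you to bound uniformly. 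Your caveat about $\bff$ being assumed only in $L^2(0,T;L^2)$ is fair, but it is a bookkeeping looseness shared with the paper's own displayed estimate (and immaterial for the time-independent forces used later in \Cref{MainThm}), not a defect specific to your argument.
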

 \begin{proof}
Using \ref{Poincare}  together with the Gr\"onwall inequality in (\ref{Energy=}) implies that the kinetic energy is uniformly
bounded in time according to
\begin{equation*}
\|\bfu (t)\|^2 + \beta^2 \| \l(\bfx) \, \nabla \times \bfu (t)\|^2 \leq \big( \|\bfu (0)\|^2 + \beta^2 \| \l(\bfx) \, \nabla \times \bfu (0)\|^2 \big) \, e^{-\alpha t}  + \frac{1}{\alpha \, \nu} \|\bff\|^2_{L^{2}(0 , t ; H^{-1})} \, (1 - e^{- \alpha t}), 
\end{equation*}
for any $ 0 \leq t < \infty$, where $ \alpha = \min \{\frac{\nu}{2 C_{\text{P}}} , \frac{\nu }{2 \beta ^2\, \ell_{\text{max}}}\} > 0$, for more details see \cite{RLZ19}. Hence, for a constant $C$ depending on the given data $\bfu_0$ and $\bff$,
 \begin{equation}\label{KEisBounded}
\begin{split}
&\sup_{t \in [0 , \infty]} \|\bfu(t)\|^2 \leq  C < \infty, \\
&\sup_{t \in [0 , \infty]}\| \l(\bfx) \, \nabla \times \bfu(t)\|^2 \leq  C < \infty.  
\end{split}
\end{equation}

Now \ref{Young} for $p= q = 2$, and $\lambda = \sqrt{ 2 \nu}$,  the right-hand side of (\ref{Energy=}) is majorized by
 \begin{equation*}
 (\bff , \bfu) \leq \|\bff \|_{H^{-1}} \,  \|\bfu \|_{H^{1}} \leq \frac{1}{4 \,\nu} \|\bff \|^2_{H^{-1}} + \nu  \|\nabla \bfu\|^2.
 \end{equation*}
 Therefore, 
  \begin{equation*}
 \frac{d}{dt}  \, \big(  \|\bfu\|^2 + \beta^2 \| \l(\bfx) \, \nabla \times \bfu\|^2\big) + \nu \| \nabla \bfu \|^2 + 2 \int_{\Omega} l^2(\bfx) |\nabla \times \bfu|^3 \, d\bfx  \leq  \frac{1}{4 \,\nu} \|\bff \|^2_{H^{-1}} .
\end{equation*}
Integrating above for $0$ to $T$, we obtain in particular
 \begin{equation}\label{L3Bound}
 \begin{split}
  \|\bfu(T)\|^2 + \beta^2 \| \l(\bfx) \, \nabla \times \bfu(T)\|^2 &  + \int_0^T  \nu \| \nabla \bfu \|^2  dt  + 2  \int_0^T\int_{\Omega} l^2(\bfx) |\nabla \times \bfu|^3 \, d\bfx \, dt  \\
&\leq   \|\bfu(0)\|^2 + \beta^2 \| \l(\bfx) \, \nabla \times \bfu(0)\|^2 + \frac{1}{4 \,\nu}  \, \|\bff \|^2_{L^2 (0 , T; H^{-1})}\\
& =: c_1(T). 
  \end{split}
\end{equation}
Now taking the limit superior from above, and using \eqref{Bnd}, one can also show that $\eps$ is well-defined.

 \end{proof}

\begin{lem}\label{L2Bound}
Under the regularity conditions specified in Assumption \ref{Assumption}, we have 

$$\bfu \in L^2(0,T; H^1(\Omega)) \cap L^3(0,T; W^{1,3}(\Omega)). $$

\begin{proof}
The result is concluded from \eqref{L3Bound}. 
\end{proof}

\end{lem}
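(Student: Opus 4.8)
\emph{Plan.} My plan is to read both inclusions directly off the a priori bound \eqref{L3Bound} obtained in Proposition~\ref{prop1}. Fix an arbitrary finite $T>0$; then \eqref{L3Bound} contains in particular
$$
\int_0^T \|\nabla\bfu(t)\|^2\,dt \;\le\; \frac{c_1(T)}{\nu}
\qquad\text{and}\qquad
\int_0^T\int_\Omega l^2(\bfx)\,|\nabla\times\bfu(t)|^3\,d\bfx\,dt \;\le\; \frac{c_1(T)}{2}.
$$
For the first inclusion, since $\bfu(t)\in W_{0, div}^{1,3}\hookrightarrow H_0^1$ for almost every $t$, by \ref{Poincare} one has $\|\bfu(t)\|_{H^1}^2\le(1+C_{\text{P}}^2)\,\|\nabla\bfu(t)\|^2$; integrating in $t$ over $(0,T)$ and using the first estimate above yields $\bfu\in L^2(0,T;H^1(\Omega))$.

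For the second inclusion I would first use the artificial lower bound $l(\bfx)\ge\ell_0>0$ from Assumption~\ref{Assumption} to remove the weight,
$$
\ell_0^2\int_0^T \|\nabla\times\bfu(t)\|_{L^3}^3\,dt
\;\le\;
\int_0^T\int_\Omega l^2(\bfx)\,|\nabla\times\bfu(t)|^3\,d\bfx\,dt
\;\le\; \frac{c_1(T)}{2},
$$
so that $\nabla\times\bfu\in L^3(0,T;L^3(\Omega))$. To pass from the curl to the full gradient I would note that $\bfu(t)$ is divergence free and vanishes on $\partial\Omega$, so the vector identity $-\Delta\bfu=\nabla\times(\nabla\times\bfu)-\nabla(\nabla\cdot\bfu)$ reduces to $-\Delta\bfu(t)=\nabla\times(\nabla\times\bfu(t))$, the right-hand side lying in $W^{-1,3}(\Omega)$. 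The $W^{1,3}$-regularity of the Dirichlet Laplacian on $\Omega$ then yields a constant $C=C(\Omega)$ with $\|\nabla\bfu(t)\|_{L^3}\le C\,\|\nabla\times\bfu(t)\|_{L^3}$, and the Poincar\'e inequality on $W_0^{1,3}(\Omega)$ bounds $\|\bfu(t)\|_{L^3}$ by $\|\nabla\bfu(t)\|_{L^3}$ as well. Cubing and integrating over $(0,T)$ then gives $\bfu\in L^3(0,T;W^{1,3}(\Omega))$. (On a bounded domain and a finite interval this second inclusion already implies the first, via $W^{1,3}(\Omega)\hookrightarrow H^1(\Omega)$ and $L^3(0,T)\hookrightarrow L^2(0,T)$; I would nevertheless record the $H^1$ bound separately, since it is the one that retains the factor $\nu$ used in the energy-dissipation estimates later.)

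\emph{Main obstacle.} The only step that is not entirely routine is the div--curl estimate $\|\nabla\bfv\|_{L^3}\le C\|\nabla\times\bfv\|_{L^3}$ for $\bfv\in W_{0, div}^{1,3}(\Omega)$, i.e.\ the $W^{1,3}$ Calder\'on--Zygmund bound for the Dirichlet Laplacian at the exponent $p=3$; this holds on bounded Lipschitz (in particular polyhedral) domains of $\mathbb{R}^3$, and for every $p\in(1,\infty)$ if $\Omega$ is smooth or convex. Once this classical fact is granted, the lemma is an immediate consequence of \eqref{L3Bound}.
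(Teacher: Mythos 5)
Your proposal is correct and follows essentially the same route as the paper, whose proof consists of the single sentence that the result is concluded from \eqref{L3Bound}; you simply make explicit the two readings of that bound (the viscous term for $L^2(0,T;H^1)$, the weighted eddy-viscosity term plus $l\geq\ell_0$ for $L^3(0,T;W^{1,3})$). The curl-to-gradient equivalence in $L^3$ that you carefully flag as the main obstacle is exactly the ``equivalency of the $L^p$ norms for $\nabla\bfu$ and $\nabla\times\bfu$'' that the paper itself invokes without comment in the proof of the subsequent lemma, so you are assuming nothing beyond what the authors do.
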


\begin{lem}
Under the regularity conditions specified in Assumption \ref{Assumption}, we have
$$ \|\nabla \times \bfu(T) \|_{L^3}^3  \leq c_2(T).$$
\end{lem}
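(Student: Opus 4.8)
\emph{Plan of proof.} The plan is to derive a vorticity energy estimate by testing \eqref{BLM} with $\bfu_t:=\partial_t\bfu$. As in the previous lemmas the computation is carried out formally, and is made rigorous via the Galerkin procedure. Taking the $L^2$ inner product of \eqref{BLM} with $\bfu_t$ and integrating by parts (the pressure term vanishes since $\nabla\cdot\bfu_t=0$, and $\bfu_t$ vanishes on $\partial\Omega$), one obtains
$$\|\bfu_t\|^2 + \beta^2\|l(\bfx)\,\nabla\times\bfu_t\|^2 + \frac{\nu}{2}\frac{d}{dt}\|\nabla\bfu\|^2 + \big(l^2(\bfx)|\nabla\times\bfu|\,\nabla\times\bfu\,,\ \nabla\times\bfu_t\big) = (\bff,\bfu_t) - (\bfu\cdot\nabla\bfu,\bfu_t).$$
For the eddy–viscosity term I would use the pointwise identity $|w|\,w\cdot\partial_t w=\partial_t\big(\tfrac13|w|^3\big)$ with $w=\nabla\times\bfu$, which rewrites that term as $\tfrac13\frac{d}{dt}\int_\Omega l^2(\bfx)|\nabla\times\bfu|^3\,d\bfx$. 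Setting
$$y(t):=\frac{\nu}{2}\|\nabla\bfu(t)\|^2 + \frac13\int_\Omega l^2(\bfx)|\nabla\times\bfu(t)|^3\,d\bfx,$$
the identity above yields $y'(t)+\|\bfu_t\|^2+\beta^2\|l(\bfx)\,\nabla\times\bfu_t\|^2=(\bff,\bfu_t)-(\bfu\cdot\nabla\bfu,\bfu_t)$.

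Next I would estimate the right–hand side. By \ref{Young}, $(\bff,\bfu_t)\le\|\bff\|^2+\tfrac14\|\bfu_t\|^2$. For the convective term, \ref{Holder} with exponents $6$ and $3$, the \ref{Embedding} $H_0^1\hookrightarrow L^6$, and \ref{Poincare} give $\|\bfu\cdot\nabla\bfu\|\le\|\bfu\|_{L^6}\|\nabla\bfu\|_{L^3}\le C\|\nabla\bfu\|\,\|\nabla\bfu\|_{L^3}$, hence $|(\bfu\cdot\nabla\bfu,\bfu_t)|\le\tfrac14\|\bfu_t\|^2+C\|\nabla\bfu\|^2\|\nabla\bfu\|_{L^3}^2$. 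Absorbing the two $\tfrac14\|\bfu_t\|^2$ contributions into the $\|\bfu_t\|^2$ on the left, discarding the remaining nonnegative terms, and using $\|\nabla\bfu\|^2\le\tfrac{2}{\nu}\,y(t)$ leads to the differential inequality
$$y'(t)\ \le\ \|\bff(t)\|^2 + \frac{C}{\nu}\,\|\nabla\bfu(t)\|_{L^3}^2\,y(t)\qquad\text{a.e. in }[0,T].$$

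I would then close the argument with Gr\"onwall's lemma in differential form, \Cref{GronwallDiff}: the coefficient $\lambda(t):=\tfrac{C}{\nu}\|\nabla\bfu(t)\|_{L^3}^2$ belongs to $L^1(0,T)$ since $\bfu\in L^3(0,T;W^{1,3}(\Omega))\hookrightarrow\{\,\|\nabla\bfu\|_{L^3}\in L^3(0,T)\subset L^2(0,T)\,\}$ by \Cref{L2Bound}; the forcing satisfies $\|\bff\|^2\in L^1(0,T)$ by Assumption \ref{Assumption}; and $y(0)<\infty$ because $\bfu_0\in W^{1,3}_{0,div}$ (so $\nabla\bfu_0\in L^3\subset L^2$) and $l$ is bounded above. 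This gives $y(T)\le c_2(T)$, with $c_2(T)$ depending only on $T$, the data $\bfu_0,\bff$, and the constants $\nu,\beta,\ell_{\max}$. Finally, invoking the lower bound $l(\bfx)\ge\ell_0>0$ from Assumption \ref{Assumption}, one has $\int_\Omega l^2(\bfx)|\nabla\times\bfu(T)|^3\,d\bfx\ge\ell_0^2\|\nabla\times\bfu(T)\|_{L^3}^3$, whence $\|\nabla\times\bfu(T)\|_{L^3}^3\le 3\ell_0^{-2}\,y(T)=:c_2(T)$, as claimed.

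The step I expect to be the main obstacle is the control of the convective term $(\bfu\cdot\nabla\bfu,\bfu_t)$ with only the regularity $\bfu\in L^2(0,T;H^1)\cap L^3(0,T;W^{1,3})$ supplied by \Cref{prop1} and \Cref{L2Bound}: since $\|\nabla\bfu\|$ is not known to be bounded in time a priori, the estimate cannot be closed directly and must be pushed through the Gr\"onwall structure above. This is also precisely the place — together with passing from the weighted quantity $\int l^2|\nabla\times\bfu|^3$ to $\|\nabla\times\bfu\|_{L^3}^3$ — where the non‑physical assumption $\ell_0>0$ is essential.
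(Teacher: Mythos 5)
Your proposal is correct and follows essentially the same route as the paper: test the equation with $\partial_t\bfu$, use the identity $l^2|\nabla\times\bfu|\,\nabla\times\bfu\cdot\partial_t(\nabla\times\bfu)=\tfrac13\partial_t\bigl(l^2|\nabla\times\bfu|^3\bigr)$, control the convective term via H\"older and Sobolev embedding, and close with Gr\"onwall using the $L^3(0,T;W^{1,3})$ bound from the preceding lemma, invoking $l(\bfx)\geq\ell_0$ at the end. The only (harmless) variation is in the convective estimate: you bound $\|\bfu\|_{L^6}\lesssim\|\nabla\bfu\|$ and absorb $\|\nabla\bfu\|^2$ into the viscous part of your Gr\"onwall quantity, arriving at the differential form with $\lambda=C\nu^{-1}\|\nabla\bfu\|_{L^3}^2\in L^1(0,T)$, whereas the paper bounds $\|\bfu\|_{L^6}\lesssim\|\nabla\bfu\|_{L^3}$ to get a $\|\nabla\times\bfu\|_{L^3}^4$ term and applies the integral form with $\lambda=C\ell_0^{-2}\|\nabla\times\bfu\|_{L^3}\in L^1(0,T)$; both closures rest on the same a priori bound.
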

\begin{proof}
One starts by testing the momentum equation \eqref{BLM} with $\partial_t \bfu$.   As
\begin{align*}
   \left( \nabla \times (l^2(\bfx) |\nabla \times \bfu| \nabla \times \bfu \, ,\,  \partial_t \bfu )  \right) & =  - \int_{\Omega}  l(\bfx)^2 |\nabla \times \bfu| \,   \nabla \times \bfu \cdot \partial_t(\nabla \times \bfu) \, d \bfx\\
   &= -  \frac{1}{3}\,  \frac{d}{dt}\int_{\Omega}  l(\bfx)^2 (\nabla \times \bfu)^3 \, d \bfx, 
\end{align*}

this gives, after integration on $(0,T)$ and integration  by parts
\begin{equation}
\begin{split}
&\int_0^T \left( \|\partial_t\bfu\|^2 + \beta^2 \|\l(\bfx)\nabla \times \partial_t\bfu\|^2\right)\, dt + \frac{\nu}{2} \|\nabla \bfu(T)\|^2 + \frac{1}{3} \|\l(\bfx)^{\frac{2}{3}}\, \nabla \times \bfu(T) \|_{L^3}^3\\
&  =  \frac{\nu}{2} \|\nabla \bfu_0\|^2  +  \frac{1}{3} \|\l(\bfx)^{\frac{2}{3}}\, \nabla \times \bfu_0 \|_{L^3}^3 - \int_0^T (\bfu\cdot \nabla \bfu , \partial_t\bfu) \, dt + \int_0^T (\bff, \partial_t\bfu)\, dt. 
\end{split}
\end{equation}

Using the equivalency of the $L^p$ norms for $\nabla \bfu$ and $\nabla \times \bfu$, with  \ref{Holder} and  \ref{Embedding} gives

\begin{equation*}
\begin{split}
\int_{\Omega}  ( \bfu\cdot \nabla \bfu)^2\,   d\bfx  \leq \|\nabla \bfu\|_{L^3}^2 \, \| \bfu\|_{L^6}^2 \leq  c \,  \|\nabla \bfu\|_{L^3}^4 \leq  c \,  \|\nabla \times \bfu\|_{L^3}^4.
\end{split}
\end{equation*}

With the above bound, the convective term can be estimated  as

\begin{equation*}
\begin{split}
\int_0^T  \left(\bfu\cdot \nabla \bfu ,  \partial_t\bfu\right)  dt & \leq   \int_0^T \int_{\Omega}  \frac{1}{4}   (\partial_t\bfu)^2+ ( \bfu\cdot \nabla \bfu)^2\,   d\bfx\, dt \\
& \leq  \frac{1}{4} \int_0^T \|\partial_t\bfu\|^2 dt +  c  \int_0^T  \|\nabla \times \bfu\|_{L^3}^4 dt. 
\end{split}
\end{equation*}

The force term can be estimated as 

$$\int_0^T (\bff, \partial_t\bfu)\, dt \leq \int_0^T \|\bff\|^2 + \frac{1}{4}  \|\partial_t\bfu\|^2\, dt. $$

Then, by combining all the above estimates, we obtain
\begin{equation}
\begin{split}
&\int_0^T \left( \|\partial_t\bfu\|^2 + 2  \beta^2 \|\l(\bfx)\nabla \times \partial_t\bfu\|^2\right)\, dt + \nu \|\nabla \bfu(T)\|^2 + \frac{2}{3} \|\l(\bfx)^{\frac{2}{3}}\, \nabla \times \bfu(T) \|_{L^3}^3\\
&  \leq   \nu \|\nabla \bfu_0\|^2  +  \frac{2}{3} \|\l(\bfx)^{\frac{2}{3}}\, \nabla \times \bfu_0 \|_{L^3}^3 + 2 \int_0^T \|\bff\|^2 dt +  c  \int_0^T  \|\nabla \times \bfu\|_{L^3}^4 dt. 
\end{split}
\end{equation}
In particular with $ 0 < \ell_0\leq \l(\bfx)$ in Assumption \ref{Assumption}, we have
\begin{equation}
\begin{split}
  \|\nabla \times \bfu(T) \|_{L^3}^3 & \leq  \frac{3}{4} \frac{\nu}{\ell_0^2}  \|\nabla \bfu_0\|^2  +  \frac{1}{\ell_0^2} \|\l(\bfx)^{\frac{2}{3}}\, \nabla \times \bfu_0 \|_{L^3}^3 + \frac{3}{\ell_0^2} \int_0^T \|\bff\|^2 dt\\
&   +  \frac{c}{\ell_0^2}  \int_0^T  \|\nabla \times \bfu\|_{L^3}^4 dt. 
\end{split}
\end{equation}

The application of Gr\"onwall lemma (Proposition \ref{Gronwall}) gives

\begin{equation}
\begin{split}
  \|\nabla \times \bfu(T) \|_{L^3}^3 &  \leq \left(    \frac{3}{4} \frac{\nu}{\ell_0^2}  \|\nabla \bfu_0\|^2  +  \frac{1}{\ell_0^2} \|\l(\bfx)^{\frac{2}{3}}\, \nabla \times \bfu_0 \|_{L^3}^3 + \frac{3}{\ell_0^2} \int_0^T \|\bff\|^2 dt\right)\\
  & \times \exp \left(  \frac{c}{\ell_0^2}  \int_0^T  \|\nabla \times \bfu\|_{L^3} dt \right).  
\end{split}
\end{equation}
Using the result in \Cref{L2Bound}, we can estimate the term in the exponential as 

$$\int_0^T  \|\nabla \times \bfu\|_{L^3} \, dt  \leq  T^{\frac{2}{3}} \,  \left(\int_0^T  \|\nabla \times \bfu\|_{L^3}^3\, dt \right)^{\frac{1}{3}} \leq T^{\frac{2}{3}} \,  \left( c_1(T)\right)^{\frac{1}{3}},$$
which proves the lemma.
\end{proof}

Having the above estimates in hand, we employ the Galerkin approximation scheme to obtain the existence of weak solutions, as stated in the next theorem,  using standard arguments.  We refer the
reader to \cites{MNR01, T77}  for a detailed argument for the passage of the limit. 

\begin{thm}[\textbf{Existence of a weak solution}]\label{Existencethm}
Given the regularity conditions outlined in Assumption \ref{Assumption}, there exists a weak solution $\bfu$ to  \eqref{BLM}
satisfying

$$\bfu \in L^{\infty}(0,T; L^2(\Omega)) \cap L^2(0,T; H^1(\Omega)) \cap L^3(0,T; W_{0, div}^{1,3}(\Omega)), $$
 for all $T >0$. 
\end{thm}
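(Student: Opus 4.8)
The plan is to construct the solution by the Galerkin method, pass to the limit, and verify the claimed regularity classes, with all a priori bounds already supplied by Proposition~\ref{prop1}, Lemma~\ref{L2Bound}, and the curl bound in $L^3$. First I would fix a Galerkin basis adapted to the problem: take $\{\vec{w}_k\}_{k\ge1}$ to be the eigenfunctions of the Stokes operator (or more conveniently, since the operator $I+\beta^2\,\nabla\times(l^2\nabla\times\cdot)$ appears in the time derivative, the eigenfunctions of this modified operator) in $W^{1,3}_{0,\mathrm{div}}$, which form an orthogonal basis of the relevant divergence-free spaces. Let $V_m=\mathrm{span}\{\vec{w}_1,\dots,\vec{w}_m\}$ and seek $\vec{u}^m(t)=\sum_{k=1}^m c_k^m(t)\vec{w}_k$ solving the finite-dimensional system obtained by testing \eqref{BLM} against each $\vec{w}_k$. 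Because the nonlinearities $\nabla\cdot(\vec{u}\otimes\vec{u})$ and $\nabla\times(l^2|\nabla\times\vec{u}|\nabla\times\vec{u})$ are continuous (indeed locally Lipschitz) in the finite-dimensional coefficients, and because the mass-type matrix coming from $\|\vec{u}\|^2+\beta^2\|l\nabla\times\vec{u}\|^2$ is positive definite (here $\ell_0>0$ is convenient but the $L^2$-coercivity of $\|\vec{u}\|$ already suffices), Picard--Lindelöf gives a unique local $C^1$ solution $c^m$, which extends globally once we have the energy estimate.

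Next I would record the a priori estimates uniformly in $m$. Testing the Galerkin system with $\vec{u}^m$ reproduces the energy equality \eqref{Energy=} at the discrete level, so Proposition~\ref{prop1} gives $\vec{u}^m$ bounded in $L^\infty(0,T;L^2)$, $l\nabla\times\vec{u}^m$ bounded in $L^\infty(0,T;L^2)$, $\vec{u}^m$ bounded in $L^2(0,T;H^1)$, and — crucially — $\int_0^T\int_\Omega l^2|\nabla\times\vec{u}^m|^3\,d\vec{x}\,dt$ bounded, which with $l\ge\ell_0>0$ yields $\vec{u}^m$ bounded in $L^3(0,T;W^{1,3}_{0,\mathrm{div}})$ via the equivalence of $\|\nabla\,\cdot\|_{L^3}$ and $\|\nabla\times\cdot\|_{L^3}$ on $W^{1,3}_{0,\mathrm{div}}$. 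I would also need a bound on $\partial_t\vec{u}^m$ in some negative-order space: from the equation, $\partial_t(\vec{u}^m+\beta^2\nabla\times(l^2\nabla\times\vec{u}^m))$ equals a sum of terms each bounded in $L^{3/2}(0,T;(W^{1,3}_{0,\mathrm{div}})^*)$ (the convective term via $L^\infty L^2\cap L^2H^1\subset L^4(0,T;L^3)$ or a Gagliardo--Nirenberg interpolation, the eddy term via the $L^3$-curl bound raised to the power $2$, the viscous and forcing terms easily), giving equicontinuity in time of a suitable functional of $\vec{u}^m$. Combining these with the Aubin--Lions--Simon lemma produces a subsequence converging strongly in $L^2(0,T;L^2)$ (and a.e.), weakly-$*$ in $L^\infty(0,T;L^2)$, weakly in $L^2(0,T;H^1)$, and weakly in $L^3(0,T;W^{1,3}_{0,\mathrm{div}})$.

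The passage to the limit in the linear terms ($\partial_t$, the $\beta^2$ dispersive term, the viscous term, the forcing) is immediate from these convergences. The convective term passes using strong $L^2(0,T;L^2)$ convergence of $\vec{u}^m$ together with weak convergence of $\nabla\vec{u}^m$ in $L^2$, in the standard Navier--Stokes fashion. The genuinely nonlinear step — and the one I expect to be the main obstacle — is the eddy viscosity term $\nabla\times(l^2|\nabla\times\vec{u}^m|\nabla\times\vec{u}^m)$: here I would invoke the strong monotonicity of Proposition~\ref{Monotonicity}, i.e.\ treat the operator $A\vec{u}:=\nabla\times(l^2|\nabla\times\vec{u}|\nabla\times\vec{u})$ as monotone, hemicontinuous, coercive and bounded from $L^3(0,T;W^{1,3}_{0,\mathrm{div}})$ into its dual, and run Minty's trick: from the energy equality one extracts $\limsup_m\int_0^T\langle A\vec{u}^m,\vec{u}^m\rangle\le\int_0^T\langle\chi,\vec{u}\rangle$ where $\chi$ is the weak limit of $A\vec{u}^m$, and monotonicity then forces $\chi=A\vec{u}$. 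One subtlety is that the $\beta^2$ term sits under the time derivative, so the "energy" controlling the $\limsup$ involves $\|\vec{u}^m(T)\|^2+\beta^2\|l\nabla\times\vec{u}^m(T)\|^2$; lower semicontinuity of this boundary term under the weak convergence (which holds because $\vec{u}^m(T)\rightharpoonup\vec{u}(T)$ weakly in $L^2$ after passing to a further subsequence using the $\partial_t$ bound and continuity in time into a weak topology) is exactly what makes the $\limsup$ inequality go the right way. Finally I would check the initial condition $\vec{u}(0)=\vec{u}_0$ using $\vec{u}\in C_w([0,T];L^2)$ and the density of the Galerkin spaces, and note that $\vec{u}_0\in W^{1,3}_{0,\mathrm{div}}$ from Assumption~\ref{Assumption} makes the initial data admissible in all three function-space classes. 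This completes the construction of a weak solution in $L^\infty(0,T;L^2)\cap L^2(0,T;H^1)\cap L^3(0,T;W^{1,3}_{0,\mathrm{div}})$ for every $T>0$.
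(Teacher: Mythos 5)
Your proposal follows essentially the same route as the paper, which establishes the a priori estimates (Proposition \ref{prop1}, Lemma \ref{L2Bound}, and the $L^3$ curl bound) and then invokes the standard Galerkin argument with passage to the limit, deferring the details to the references \cite{MNR01} and \cite{T77}. Your writeup simply fills in those deferred details --- Galerkin basis, uniform bounds, Aubin--Lions compactness, and the Minty monotonicity trick for the eddy viscosity term via Proposition \ref{Monotonicity} --- and is consistent with what the paper intends.
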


\begin{thm}[\textbf{Uniqueness of the weak solution}]\label{Uniquenssthm}
Given the regularity conditions outlined in Assumption \ref{Assumption}, the weak solution of \eqref{BLM} is unique.
\end{thm}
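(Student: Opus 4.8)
The plan is to take two weak solutions $\bfu_1, \bfu_2$ of \eqref{BLM} with the same data, set $\bfw := \bfu_1 - \bfu_2$, and derive an energy inequality for $\bfw$ that is amenable to Gr\"onwall's lemma (Proposition \ref{GronwallDiff}). Subtracting the two weak formulations and testing with $\bfw$ itself, the time-derivative and dispersive terms combine into $\tfrac12\tfrac{d}{dt}\big(\|\bfw\|^2 + \beta^2\|\l(\bfx)\,\nabla\times\bfw\|^2\big)$, the viscous term gives $\nu\|\nabla\bfw\|^2$, and the difference of the two eddy-viscosity terms is controlled \emph{from below} by Proposition \ref{Monotonicity} (strong monotonicity): it contributes a nonnegative term $\tilde c\,\|\l^{3/2}(\bfx)\,\nabla\times\bfw\|_{L^3}^3 \geq 0$, which we simply discard. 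So the only genuinely dangerous term is the difference of the nonlinear convective terms, $(\bfu_1\otimes\bfu_1 - \bfu_2\otimes\bfu_2, \nabla\bfw)$, which after the usual manipulation (using $\nabla\cdot\bfu_i=0$ and $\bfw|_{\partial\Omega}=0$) reduces to the single trilinear term $b(\bfw,\bfu_2,\bfw) = (\bfw\cdot\nabla\bfu_2,\bfw)$.

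The key steps, in order: (i) justify that the computation is legitimate, i.e. that $\partial_t\bfw$ and $\bfw$ are regular enough to pair — here we invoke the extra regularity coming from the artificial lower bound $\ell_0 \leq \l(\bfx)$, which by the last lemma gives $\nabla\times\bfu_i(T) \in L^3$ uniformly, hence $\bfu_i \in L^\infty(0,T;W^{1,3})$, and also $\partial_t\bfu_i \in L^2(0,T;L^2)$ from the same lemma; (ii) write the energy identity for $\bfw$, discard the monotone eddy-viscosity and the dispersive-derivative terms appropriately, keeping $\tfrac12\tfrac{d}{dt}\|\bfw\|^2 + \nu\|\nabla\bfw\|^2 \leq |b(\bfw,\bfu_2,\bfw)|$; (iii) estimate $|b(\bfw,\bfu_2,\bfw)| \leq \|\nabla\bfu_2\|_{L^3}\,\|\bfw\|_{L^3}^2$ via H\"older, then interpolate $\|\bfw\|_{L^3}^2 \leq \|\bfw\|^{?}\,\|\nabla\bfw\|^{?}$ (in $d=3$, $\|\bfw\|_{L^3} \leq C\|\bfw\|^{1/2}\|\nabla\bfw\|^{1/2}$ or use $\|\bfw\|_{L^3}\le C\|\bfw\|_{L^2}^{1/2}\|\bfw\|_{H^1}^{1/2}$), absorb the $\|\nabla\bfw\|^2$ part into $\nu\|\nabla\bfw\|^2$ by \ref{Young}, leaving $\tfrac{d}{dt}\|\bfw\|^2 \leq C\nu^{-1}\|\nabla\bfu_2\|_{L^3}^2\,\|\bfw\|^2$; (iv) observe $\int_0^T \|\nabla\bfu_2\|_{L^3}^2\,dt < \infty$ is \emph{not} immediately available from $\bfu_2 \in L^3(0,T;W^{1,3})$ alone, but it \emph{is} available from $\bfu_2 \in L^\infty(0,T;W^{1,3})$, which the lower bound on $\l$ secures; hence $\lambda(t) := C\nu^{-1}\|\nabla\bfu_2(t)\|_{L^3}^2 \in L^1(0,T)$ and $L^\infty(0,T)$; (v) apply Proposition \ref{GronwallDiff} with $\psi \equiv 0$, $\alpha(t) = \|\bfw(t)\|^2$, $\alpha(0) = 0$, to conclude $\|\bfw(t)\|^2 \leq 0$ for all $t$, i.e. $\bfu_1 = \bfu_2$.

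The main obstacle is step (iii)--(iv): handling the convective term in three dimensions. The naive Navier--Stokes-type estimate requires an $L^4_tL^4_x$-type control on $\bfw$ (Ladyzhenskaya) or, as here, control of $\nabla\bfu_2$ in a space strong enough that the resulting $\lambda(t)$ is integrable in time. This is exactly where the artificial hypothesis $0 < \ell_0 \leq \l(\bfx)$ earns its keep: without it one only has $\bfu_2 \in L^3(0,T;W^{1,3})$, which gives $\lambda \in L^{3/2}(0,T)$ at best through $\|\nabla \bfu_2\|_{L^3}^2$ — actually $L^{3/2}$ \emph{is} enough for Gr\"onwall, so one should double-check whether the lemma's $L^\infty(0,T;W^{1,3})$ bound is truly needed or merely convenient; in the write-up I would use the $L^\infty_t W^{1,3}_x$ bound for a clean one-line verification that $\lambda \in L^1(0,T)$. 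One should also verify the formal testing with $\partial_t\bfw$ is not needed — testing with $\bfw$ suffices, so the $\beta^2$-dispersive term only helps (adds a nonnegative $\tfrac{d}{dt}$ quantity on the left) and never needs to be estimated. A final routine check: the difference of eddy-viscosity terms is paired against $\nabla\times\bfw$, and Proposition \ref{Monotonicity} applies verbatim with $\bfu = \bfu_1$, $\bfu' = \bfu_2$, giving the sign we want.
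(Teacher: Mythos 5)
Your proposal is correct and follows essentially the same route as the paper: difference of two solutions, strong monotonicity (Proposition \ref{Monotonicity}) to discard the eddy-viscosity difference, skew-symmetry to reduce the convective term to a single trilinear form estimated by $\|\nabla\bfu\|_{L^3}\,\|\bfw\|\,\|\nabla\bfw\|$, Young's inequality, and Gr\"onwall. The only cosmetic difference is at your step (iv): the paper does not need the $L^\infty(0,T;W^{1,3})$ bound there, since (as you yourself note) $\bfu\in L^3(0,T;W^{1,3})$ from Lemma \ref{L2Bound} already gives $\int_0^T\|\nabla\bfu\|_{L^3}^2\,dt\le T^{1/3}\|\bfu\|_{L^3(0,T;W^{1,3})}^2<\infty$ by H\"older in time, which is all Gr\"onwall requires.
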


\begin{proof}
 Let us assume that there are two weak solutions $\bfu$ and $\bfu'$ guaranteed by \Cref{Existencethm} with $\bfu_0 = \bfu'_0$, and denote $\bfw= \bfu - \bfu'$. Thus $\bfw \in W^{1,3}(\Omega)$ and $\bfw_0=0$, and the error function satisfies 
\begin{equation*}
\begin{split}
\frac{1}{2} \frac{d}{dt}  & \, \left( \,  \|\bfw\|^2 + \beta^2 \| \l(\bfx) \, \nabla \times \bfw\|^2\, \right) + \nu \| \nabla \bfw \|^2 + \left( \bfu \cdot \nabla \bfu - \bfu' \cdot \nabla \bfu' , \bfw \right)\\
& + \left( l^2(\bfx) |\nabla \times \bfu| \nabla \times \bfu - l^2(\bfx) |\nabla \times \bfu'| \nabla \times \bfu'\,  , \, \nabla \times (\bfu - \bfu')\right)  = 0. 
\end{split}
\end{equation*} 
After using the monotonicity property, \Cref{Monotonicity}, and also 
$$ (\bfu \cdot \nabla) \bfu   - (\bfu' \cdot \nabla) \bfu'  =  (\bfw \cdot \nabla) \bfu + (\bfu' \cdot \nabla) \bfw,$$
we obtain
\begin{equation*}
\begin{split}
\frac{1}{2} \frac{d}{dt}  & \, \left( \,  \|\bfw\|^2 + \beta^2 \| \l(\bfx) \, \nabla \times \bfw\|^2\, \right) + \nu \| \nabla \bfw \|^2 \leq - (\bfw \cdot \nabla \bfu , \bfw).
\end{split}
\end{equation*} 
By \ref{Holder} and \ref{Embedding}, the above nonlinear term can be estimated as 
\begin{equation*}
\begin{split}
(\bfw \cdot \nabla \bfu , \bfw)  \leq \|\nabla \bfu \|_{L^3} \| \bfw\|  \| \bfw\|_{L^6} & \leq  \|\nabla \bfu \|_{L^3} \| \bfw\|  \| \nabla \bfw\|\\
& \leq \frac{\nu}{2 } \| \nabla \bfw\|^2 + \frac{1}{2 \nu } \|\nabla \bfu \|_{L^3}^2 \| \bfw\|^2.  
\end{split}
\end{equation*}
Hence 
\begin{equation}\label{Eq1}
\begin{split}
 \frac{d}{dt}  & \, \left( \,  \|\bfw\|^2 + \beta^2 \| \l(\bfx) \, \nabla \times \bfw\|^2\, \right) + \nu \| \nabla \bfw \|^2 \leq  \frac{1}{ \nu } \|\nabla \bfu \|_{L^3}^2 \| \bfw\|^2.
\end{split}
\end{equation} 

With \Cref{GronwallDiff} in mind, denote  $\lambda(t)=   \nu^{-1} \|\nabla \bfu \|_{L^3}^2 $. In light of \Cref{L2Bound} and by using \ref{Holder}, one can show that $\lambda \in L^1(0,T)$  as 
$$ \frac{1}{ \nu } \int_0^T \|\nabla \bfu \|_{L^3}^2 \, dt  \leq   \frac{1}{ \nu } \, T^{\frac{1}{3}}\,  \|\bfu\|_{L^3(0,T; W^{1,3})}^2. $$
Finally, applying \Cref{GronwallDiff} to \eqref{Eq1} proves the uniqueness of the solution.

\end{proof}

\section{Statistics in the backscatter model}\label{sec:S5}
For the statistical study of the model, we restrict attention to time-independent applied forces in this paper, but the analysis could be extended to a wide variety of time-dependent forces.
 \begin{definition}\label{Def-Scales}  The scale of the body force $F$, large scale velocity  $U$ and length $L$, are defined, 
 $$F := \langle \frac{1}{|\Omega|} \| \bff\|^2 \rangle^{\frac{1}{2}}, $$
  $$U := \langle \frac{1}{|\Omega|} \| \bfu\|^2 \rangle^{\frac{1}{2}}, $$
  $$L := \min\Big\{ |\Omega|^{\frac{1}{3}} ,  \frac{F}{(\frac{1}{|\Omega|} \, \| \nabla \bff\|^2)^\frac{1}{2}} ,  \frac{F}{(\frac{1}{|\Omega|} \, \| \nabla \times \bff\|_{L^3}^3)^\frac{1}{3}}\Big\}.$$
 The dimensionless Reynolds number is given by
$$\Rey = \frac{U\, L}{\nu}.$$
 \end{definition}
 Next, we prove the central result of this paper, which is generally understood in terms of the cascade picture of turbulence: the energy dissipation rate tends to become independent of the viscosity in high-Reynolds-number turbulence.
\begin{thm}\label{MainThm}

Let $\bfu$  be a weak solution of the backscatter Baldwin-Lomax model \eqref{BLM} with homogeneous Dirichlet boundary conditions,  $\bfu = 0$ on $(0, T) \times \partial \Omega$,  satisfying the energy inequality \eqref{Energy=}.  Suppose the data   $\bff = \bff(\bfx)$ is smooth, divergence-free with zero mean functions, and $\bfu_0 \in L^2(\Omega)$. Then the time averaged energy dissipation rate per unit mass $\eps$ given by \eqref{EDR} satisfies 
\begin{equation}\label{FinalBd}
\eps \leq \Big( 1 + \frac{1}{\Rey} + \frac{1}{3}\, (\frac{\ell_{\text{max}}}{L})^2\Big)\, \frac{U^3}{L},
\end{equation}
 where  $\ell_{\text{max}} := \sup_{\bfx \in \Omega} |l (\bfx)|^2$.
\end{thm}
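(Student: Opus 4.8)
The plan is to follow the classical Doering--Constantin/Doering--Foias scheme adapted to the backscatter model, extracting the bound directly from the energy equality \eqref{Energy=}. First I would average \eqref{Energy=} over $[0,T]$, divide by $|\Omega|$, and take $\limsup_{T\to\infty}$. Since the kinetic energy $\|\bfu(t)\|^2 + \beta^2\|l(\bfx)\,\nabla\times\bfu(t)\|^2$ is uniformly bounded in time by \Cref{prop1}, the time-averaged derivative term vanishes in the limit, leaving
\begin{equation*}
\eps = \langle \varepsilon_0 + \varepsilon_M \rangle = \left\langle \frac{1}{|\Omega|}(\bff,\bfu)\right\rangle.
\end{equation*}
So the whole problem reduces to estimating the time-averaged power input $\langle \frac{1}{|\Omega|}(\bff,\bfu)\rangle$ in terms of $U$, $L$, and $\nu$.

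\textbf{Bounding the power input.} The key trick is to introduce the pressure-like structure of $\bff$: since $\bff$ is smooth, divergence-free, with zero mean, I would write $(\bff,\bfu)$ and exploit that $\bfu$ is divergence-free and vanishes on $\partial\Omega$. The standard move is to compare $(\bff,\bfu)$ against $(\bff,\bfv)$ for a suitable background field, but here the cleaner route (as in Doering--Foias) is to use the Cauchy--Schwarz inequality together with the definition of $L$ as a ratio involving $\|\nabla\bff\|$ and $\|\nabla\times\bff\|_{L^3}$. Concretely, $(\bff,\bfu) \le \|\bff\|\,\|\bfu\|$ gives $\langle\frac{1}{|\Omega|}(\bff,\bfu)\rangle \le \langle\frac{1}{|\Omega|}\|\bff\|^2\rangle^{1/2}\langle\frac{1}{|\Omega|}\|\bfu\|^2\rangle^{1/2} = FU$ after applying Cauchy--Schwarz in time as well. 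Then one wants $F \lesssim U^2/L$. To get this, I would go back to the energy balance and estimate $F$ against the dissipation: test \eqref{BLM} appropriately, or more directly observe that $FU = \eps$, so $\eps \le FU$, and separately bound $F$ in terms of $\eps$ using the definition of $L$ — the terms $\|\nabla\bff\|$ and $\|\nabla\times\bff\|_{L^3}$ in $L$ are designed to be paired with $\varepsilon_0 = \nu\|\nabla\bfu\|^2/|\Omega|$ and $\varepsilon_M = \|l^2|\nabla\times\bfu|^3\|_{L^1}/|\Omega|$ respectively via integration by parts: $(\bff,\bfu) = -(\nabla\bff : \nabla\Delta^{-1}\bfu)$-type manipulations, or Hölder $(\bff,\bfu)$ against $\nabla\times$ terms.

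\textbf{Assembling the three terms.} The three summands in the final bound $1 + \Rey^{-1} + \frac13(\ell_{\max}/L)^2$ strongly suggest the following accounting: the leading $U^3/L$ comes from the main balance $\eps \le FU \le (U^2/L)\cdot U$; the $\Rey^{-1}\frac{U^3}{L} = \frac{\nu U^2}{L^2}$ correction comes from controlling the viscous contribution when $\bff$ is bounded via its gradient (the $|\Omega|^{1/3}$ and $F/\|\nabla\bff\|$ pieces of $L$); and the $\frac13(\ell_{\max}/L)^2\frac{U^3}{L}$ term comes from bounding the curl-cubed dissipation $\varepsilon_M$ via Hölder: $\int l^2|\nabla\times\bfu|^3 \le \ell_{\max}^2\|\nabla\times\bfu\|_{L^3}^3$ paired against the $F/\|\nabla\times\bff\|_{L^3}$ definition of $L$, with the $\frac13$ coming from a Young's inequality with exponents $(3/2,3)$. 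So I would split $(\bff,\bfu)$ (or its integrated/averaged version) into three pieces matched to the three competing length scales in the definition of $L$, apply Young's inequality on each to absorb $\varepsilon_0$ and $\varepsilon_M$ back into the left side with small coefficients, and collect constants.

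\textbf{Main obstacle.} The delicate point is getting the constants exactly $1$, $\Rey^{-1}$, and $\frac13(\ell_{\max}/L)^2$ rather than just $\mathcal{O}(1)$ — this requires choosing the Young's inequality parameters sharply and being careful that the $\limsup$ of a product is bounded by the product of $\limsup$'s only after a Cauchy--Schwarz in time (valid since everything is nonnegative and the relevant averages exist by \Cref{prop1} and \eqref{L3Bound}). The second subtlety is justifying that $\limsup_T \frac1T\int_0^T \frac{d}{dt}(\text{kinetic energy})\,dt = 0$: this is immediate from the uniform-in-time bound \eqref{KEisBounded} since the integral telescopes to $[KE(T)-KE(0)]/T \to 0$. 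I expect the bookkeeping of which term of $L$ controls which dissipation mechanism — and verifying the definition of $L$ is precisely engineered so the three Young's-inequality splits close — to be where the real work lies; the rest is the standard Doering--Foias template.
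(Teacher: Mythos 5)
Your proposal follows the same Doering--Foias two-step scheme as the paper: Step~1 (time-average the energy equality, kill the $\frac{d}{dt}$ term via the uniform bound of \Cref{prop1}, Cauchy--Schwarz in space and time to get $\eps\le FU$) is exactly the paper's Step~1, and your term-by-term accounting of where $1$, $\Rey^{-1}$, and $\tfrac13(\ell_{\text{max}}/L)^2$ come from --- including the $(3/2,3)$ Young exponents pairing $\varepsilon_M$ against the $F/\|\nabla\times\bff\|_{L^3}$ scale in $L$ --- matches the paper's Step~2.

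The one move you leave unspecified is the crux of Step~2, and the alternatives you float would not produce it. The paper bounds $F$ by taking the $L^2$ inner product of the momentum equation \eqref{BLM} with $\bff$ itself (not with $\bfu$), so that $F^2=\frac{1}{|\Omega|T}\int_0^T(\bff,\bff)\,dt$ appears on the left while each term on the right --- $(\bfu_t,\bff)$, $\beta^2(l^2\nabla\times\bfu_t,\nabla\times\bff)$, $(\bfu\otimes\bfu,\nabla\bff)$, $\nu(\nabla\bfu,\nabla\bff)$, $(l^2|\nabla\times\bfu|\nabla\times\bfu,\nabla\times\bff)$ --- carries exactly one factor of $\bff$ and is therefore bounded by $F$ times velocity or dissipation quantities through the three length scales engineered into $L$; dividing by $F$ gives \eqref{F}, and the two time-derivative terms are $\mathsf{O}(1/T)$ by \Cref{prop1}. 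Your suggested ``$(\bff,\bfu)=-(\nabla\bff:\nabla\Delta^{-1}\bfu)$-type manipulations'' and ``H\"older $(\bff,\bfu)$ against $\nabla\times$ terms'' do not lead anywhere (and $FU=\eps$ is an inequality, not an equality); it is the pairing of the full PDE with $\bff$ that converts the problem of bounding $F$ into bounding the quadratic, viscous, and eddy-viscosity fluxes against $\nabla\bff$ and $\nabla\times\bff$. With that single substitution your outline closes, and the absorption of $\tfrac12\langle\varepsilon_0\rangle+\tfrac23\langle\varepsilon_M\rangle$ back into $\eps$ proceeds as you describe.
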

\begin{proof}
Calculation consists of two main steps. First, we find an upper bound on $\eps$ based on $F$ and $U$. In the second step,  $F$ is estimated, and the result is derived.  
\subsection*{Step 1.} Take  the time average of (\ref{Energy=}), noting that the time average of the time derivative vanishes, \Cref{prop1}, apply \ref{Holder} in time for $p = q = 2$, and divide by $|\Omega|$ to deduce
\begin{equation*}
\begin{split}
\frac{1}{T} \int_0^T \varepsilon \, dt & \leq  \mathsf{O}(\frac{1}{T}) + \frac{1}{\Omega} \, \frac{1}{T} \int_0^T (\bff , \bfu)\, dt  \\
& \leq   \mathsf{O}(\frac{1}{T}) + \big( \frac{1}{\Omega} \frac{1}{T} \int_0^T \|\bff\|^2 \, dt \big)^{\frac{1}{2}} \, \big( \frac{1}{\Omega} \frac{1}{T} \int_0^T \|\bfu\|^2 \, dt \big)^{\frac{1}{2}}.
\end{split}
\end{equation*}
Taking the limit superior, which exists by \Cref{prop1}, as $T \rightarrow \infty$ we obtain
\begin{equation}\label{EFU}
\eps \leq F \, U. 
\end{equation}

\subsection*{Step 2.} To bound the RHS,  we first find a bound on $F$ by taking the $L^2$ inner product of (\ref{BLM}) with $\bff$, integrating by parts, and averaging over $[0, T]$ and the domain $\Omega$.
\begin{equation}\label{F2}
\begin{split}
F^2 = \frac{1}{|\Omega|}\, \frac{1}{T} \, \int_0^T \big[ & (\bfu_t, \bff)+   \beta^2   \, ( l^2(\bfx) \nabla \times \bfu_t , \nabla \times \bff)  +    (\bfu \otimes \bfu , \nabla \bff )  \\
&  + \nu (\nabla \bfu , \nabla \bff) +  (l^2(\bfx) |\nabla \times \bfu| \nabla \times \bfu , \nabla \times \bff )  \big]\, \, dt.
\end{split}
\end{equation}

Next, we estimate each term on the RHS of the above equality as follows. The first  term $\rightarrow 0$ as $T \rightarrow \infty$ since 
\begin{equation}
\frac{1}{T} \, \int_0^T  (\bfu_t, \bff) \, dt  \simeq \mathsf{O} (\frac{1}{T}).
\end{equation}

The second term is also $\mathsf{O}(\frac{1}{T})$. This can be shown by using    H\"older's inequality  along with \eqref{Bnd} as follows
 
 \begin{equation}
 \begin{split}
 \left| \frac{1}{T} \,  \int_0^T ( l^2(\bfx) \nabla \times \bfu_t , \nabla \times \bff) \, dt \right| & =   \left| \frac{1}{T} \,  \int_0^T \partial_t( l^2(\bfx) \nabla \times \bfu , \nabla \times \bff) \, dt\right|\\
 &  \leq \ell_{\text{max}}\,  \|\nabla \times \bff \| \left[  \frac{\| l(\bfx) \, \nabla \times \bfu(T)\| + \| l(\bfx) \, \nabla \times \bfu_0\|}{T}\right].
 \end{split}
 \end{equation}

%\begin{equation*}
%\begin{split}
%\frac{1}{T} \,  \int_0^T ( l^2(\bfx) \nabla \times \bfu_t , \nabla \times \bff) \, dt     & \leq  \frac{1}{T} \, \int_0^T \| l^2(\bfx) \nabla \times \bfu_t\|\, \| \nabla \times \bff \| \, dt \\
%& \leq \big( \frac{1}{T} \,  \int_0^T  \| l^2(\bfx) \nabla \times \bfu_t\|^2 \, dt \big)^{\frac{1}{2}} \, \big( \frac{1}{T} \,  \int_0^T \| \nabla \times \bff \|^2 \, dt \big)^{\frac{1}{2}}
%\end{split}
 %\end{equation*}
  %While the force term in the above inequality is under control, we can show that the first term on the RHS  is of order $\frac{1}{T}$,
 
% \begin{equation*}
%\begin{split}
 %\frac{1}{T} \,  \int_0^T  \| l^2(\bfx) \nabla \times \bfu_t\|^2 \, dt  & =  \frac{1}{T} \,  \int_0^T   \int_{\Omega}| l^2(\bfx) \nabla \times \bfu_t|^2  d\bfx\, dt \\
% & \leq \ell_{\text{max}}^4 \, \frac{1}{T} \, \int_{\Omega} \int_0^T |\nabla \times \bfu_t|^2 \, dt \, d\bfx \\
% & \leq \ell_{\text{max}}^4   \, \int_{\Omega} \|\nabla \times \bfu_t\|_{L^{\infty}(0 , T)}  \, \big[   \frac{1}{T}\int_0^T  \frac{\partial}{\partial t}(\nabla \times \bfu)\, dt\big ] \, d\bfx  \\
 %&  =  \ell_{\text{max}}^4  \, \int_{\Omega} \|\nabla \times \bfu_t\|_{L^{\infty}(0 , T)}  \,  \big[  \nabla \times \frac{\bfu (T) - \bfu_0}{T}\big]\, d\bfx
%\end{split}
 %\end{equation*}
 
 In the long time limit $T \longrightarrow \infty$, the third and the fourth terms are bounded using the Cauchy-Schwarz-Young inequality and Definition (\ref{Def-Scales}). 
 \begin{equation}\label{Est3}
 \begin{split}
\left| \frac{1}{|\Omega|}\, \frac{1}{T} \, \int_0^T  (\bfu \otimes \bfu , \nabla \bff )    \, dt \right| & \leq \|\nabla \bff\|_{L^\infty} \, \frac{1}{|\Omega|}\, \frac{1}{T} \, \int_0^T  \|\bfu\|^2\, dt\\
 &\leq \frac{F}{L}\, U^2.
 \end{split}
 \end{equation}
 Moreover 
  \begin{equation}\label{Est4}
 \begin{split}
 \left|\frac{1}{|\Omega|}\, \frac{1}{T} \, \int_0^T \nu  (\nabla \bfu , \nabla \bff) \, dt  \right| & \leq \big(  \frac{1}{T} \, \frac{1}{|\Omega|} \, \int_0^T \nu \|\nabla \bfu\|^2 \, dt\big)^{\frac{1}{2}} \, \nu^{\frac{1}{2}} \,  \big(  \frac{1}{T} \, \frac{1}{|\Omega|} \, \int_0^T \| \nabla\bff\|^2 \, dt\big)^{\frac{1}{2}} \\
 & \leq \langle \varepsilon_0\rangle^{\frac{1}{2}} \,  \nu^{\frac{1}{2}} \,  \frac{F}{L} =  F \big[  \frac{\langle \varepsilon_0\rangle^{\frac{1}{2}}}{U^{\frac{1}{2}}} \, \, \frac{\nu^{\frac{1}{2}} \, U^{\frac{1}{2}}}{L}\big] \\
 & \leq F \big[ \frac{1}{2} \, \frac{\langle \varepsilon_0\rangle}{U} + \frac{1}{2} \frac{\nu \, U}{L^2}\big].
 \end{split}
 \end{equation}
 
The last term in \eqref{F2}  is estimated first  using H\"older’s inequality for $p=3, q = \frac{3}{2}$  and then considering  the scales  given in Definition  \ref{Def-Scales}  as 
\begin{equation*}
\begin{split}
\left| \frac{1}{|\Omega|}(l^2(\bfx) |\nabla \times \bfu| \nabla \times \bfu , \nabla \times \bff )\right| & \leq  \frac{1}{|\Omega|} \|\nabla \times \bff \|_{L^3} \, \big( \int_{\Omega} l^3(\bfx) |\nabla \times \bfu|^3 \, d\bfx \big)^{\frac{2}{3}} \\
& = \frac{1}{|\Omega|^{\frac{1}{3}}} \|\nabla \times \bff \|_{L^3} \, \big(  \frac{1}{|\Omega|} \int_{\Omega} l^3(\bfx) |\nabla \times \bfu|^3 \, d\bfx \big)^{\frac{2}{3}} \\
& \leq \frac{F}{L} \, \ell_{\text{max}}^{\frac{2}{3}} \, \big(  \frac{1}{|\Omega|} \int_{\Omega} l^2(\bfx) |\nabla \times \bfu|^3 \, d\bfx \big)^{\frac{2}{3}} \\
& = F \, \big[ \frac{\ell_{\text{max}}^{\frac{2}{3}} \, U^{\frac{2}{3}}}{L} \,  \, \, \left(\frac{\varepsilon_M}{U}\right)^{\frac{2}{3}} \big] \\
& \leq F \, \big[ \frac{1}{3} \frac{\ell_{\text{max}}^2 \, U^2}{L^3}  + \frac{2}{3} \frac{\varepsilon_M}{U} \big].
\end{split}
\end{equation*}
After taking long time averaged $\langle \cdot \rangle$ from above estimate, we obtain,
\begin{equation}\label{Est5}
\left| \frac{1}{|\Omega|} \, \frac{1}{T} \, \int_0^T\, (l^2(\bfx) |\nabla \times \bfu| \nabla \times \bfu , \nabla \times \bff )| \, dt \right| \leq  F  \, \big[ \frac{1}{3} \frac{\ell_{\text{max}}^2 \, U^2}{L^3}  + \frac{2}{3} \frac{\langle\varepsilon_M\rangle}{U} \big]. 
\end{equation}

Taking the limit superior $T \rightarrow \infty$ from (\ref{F2}), and using the estimates (\ref{Est3}), (\ref{Est4}) and (\ref{Est5}) gives,
\begin{equation}\label{F}
F \leq \frac{U^2}{L} + \frac{1}{2} \, \frac{\langle \varepsilon_0\rangle}{U} + \frac{1}{2} \frac{\nu \, U}{L^2} + \frac{1}{3} \frac{\ell_{\text{max}}^2 \, U^2}{L^3}  + \frac{2}{3} \frac{\langle\varepsilon_M\rangle}{U}.
\end{equation}

Now one can obtain the upper bound on $\eps$ by considering the  above estimate on (\ref{EFU}),  
$$
\eps \leq \Big( 1 + \frac{1}{\Rey} + \frac{1}{3}\, (\frac{\ell_{\text{max}}}{L})^2\Big)\, \frac{U^3}{L}.
$$
\end{proof}

\begin{re}
In the more physically relevant case of the vanishing viscosity limit, where \( \nu \to 0 \) but the eddy viscosity does not vanish, the above estimate can be rewritten as
\[
\eps \lesssim \left( 1 +  ( \frac{\ell_{\text{max}}}{L} )^2 \right)\, \frac{U^3}{L},
\]
which is consistent with Kolmogorov’s classical turbulence theory and also with the rate proven in \cite{DF02}.
\end{re}

\subsection{On the Model's parameter $\l(\bfx)$}
As in all other models, the model performance herein also depends on the value of the parameters chosen. The estimate \eqref{FinalBd} gives insight into the model's parameter  $\l(\bfx)$,  but not $\beta$. The eddy viscosity dissipation should be comparable to the pumping rate of energy to small scales by the nonlinearity, $U^3/L$, and to energy dissipation in the inertial range, $\Rey^{-1}\, U^3/L$. Hence, the following restriction is suggested 
\begin{equation}\label{MeshIndp}
\frac{L}{\Rey^{1/2}}  \lesssim \,  \l(\bfx) \, \lesssim L\, , \quad \text{mesh independent case}.
\end{equation}
The above range  $\l(\bfx) \sim \Rey^{- \frac{1}{2}}$ is consistent with the mixing length suggested in \cite{W06} (see Chapter 3),  and also the one used in  \cite{RLZ19} for the computational experiments.

To solve the equation, and after spatial discretization,  the smallest scale available is at the order of the mesh size $h$. On the other hand,  the size of the smallest persistent solution scales is given by Kolmogorov dissipation micro-scale $\Rey^{-{3/4}} \, L$. Hence, we can estimate $h \sim \Rey^{-{3/4}} \, L $, and the following estimate of mesh dependence case can be derived
\begin{equation}\label{MeshDep}
L^{\frac{1}{3}}\, h^{\frac{2}{3}}  \lesssim \,  \l(\bfx) \, \lesssim L\,, \quad \text{mesh dependent case}.
\end{equation}
Therefore based on the above bound, after discretization,  the mixing length can be chosen as $\l(\bfx) \sim h^{\frac{2}{3}}$, in contrast with the one suggested in \cite{RLZ19} as $\l(\bfx) \sim h$.

\section{A Numerical Illustration}\label{sec:S6}
This section presents a computational demonstration of the model's theoretical results, which are consistent with the theoretical predictions. 

\subsection*{$2D$ channel flow}
While  $2D$ turbulence is easier to simulate, the absence of a vortex stretching mechanism results in an inverse energy cascade \cite{AD06, FJMR02}, making the behavior of $2D$ turbulence more intricate compared to $3D$ turbulence. Although our analytical bound \eqref{FinalBd} is derived for \eqref{BLM} in three-dimensional space, we conduct a simulation in two-dimensional space due to the significant computational complexity and time consumption associated with three-dimensional simulations. 
%The corresponding analytical study of the backscatter Baldwin-Lomax model \eqref{BLM} in two-dimensional space will be deferred to future research.
\subsection*{Problem Setting} 

 We investigated the classical two-dimensional channel flow past an obstacle \cite{schfer1996benchmark}. The domain under consideration is a rectangular channel defined by \(\Omega = [0, 4] \times [0, 1]\), which includes a square obstacle located at \([0.5, 0.6] \times [0.45, 0.55]\), \Cref{Fig;2dDomain}. There is no external forcing (\(\bff = 0\)), and the flow passes through this domain from left to right. The inflow profile is specified as 
$$u_1|_{\mbox{inflow}} = 4y(1 - y)  \hspace{0.5cm}  \mbox{and}  \hspace{0.5cm} u_2|_{\mbox{inflow}} = 0,$$ 
while the no-slip condition \(\bfu = 0\) is imposed on the remaining boundaries. We implement the zero-traction boundary condition using the standard 'do-nothing' condition at the outflow.

We set $\beta = 10$ in all simulations, following the choice in \cite[p.~17]{RLZ19}.
 As suggested by our analytical results in \eqref{MeshIndp} and \eqref{MeshDep}, and to verify these findings numerically, we conduct two sets of simulations using different spatially dependent mixing lengths $l(\mathbf{x})$.  Let $\bar{y}$ denote the distance from a point $\mathbf{x}$ to the nearest wall. Motivated by \eqref{MeshIndp} and  the formulation in  \cite[Chapter 3]{W06}, we define the first mixing length $l_1(\mathbf{x})$ as
\begin{equation}\label{L1}
l_1(\mathbf{x}) =
\begin{cases}
0.41 \cdot \bar{y}, & \text{if } \bar{y} < 0.2 \cdot Re^{-1/2}, \\
0.41 \cdot 0.2 \cdot Re^{-1/2}, & \text{otherwise}.
\end{cases}
\end{equation}
Motivated by \eqref{MeshDep}, we also consider a second choice given by 
\begin{equation}\label{L2}
l_2(\mathbf{x}) = h^{2/3},
\end{equation}
where $h$ denotes the local mesh width.

\begin{figure}[h!]
    \centering
    \includegraphics[width=0.7\textwidth]{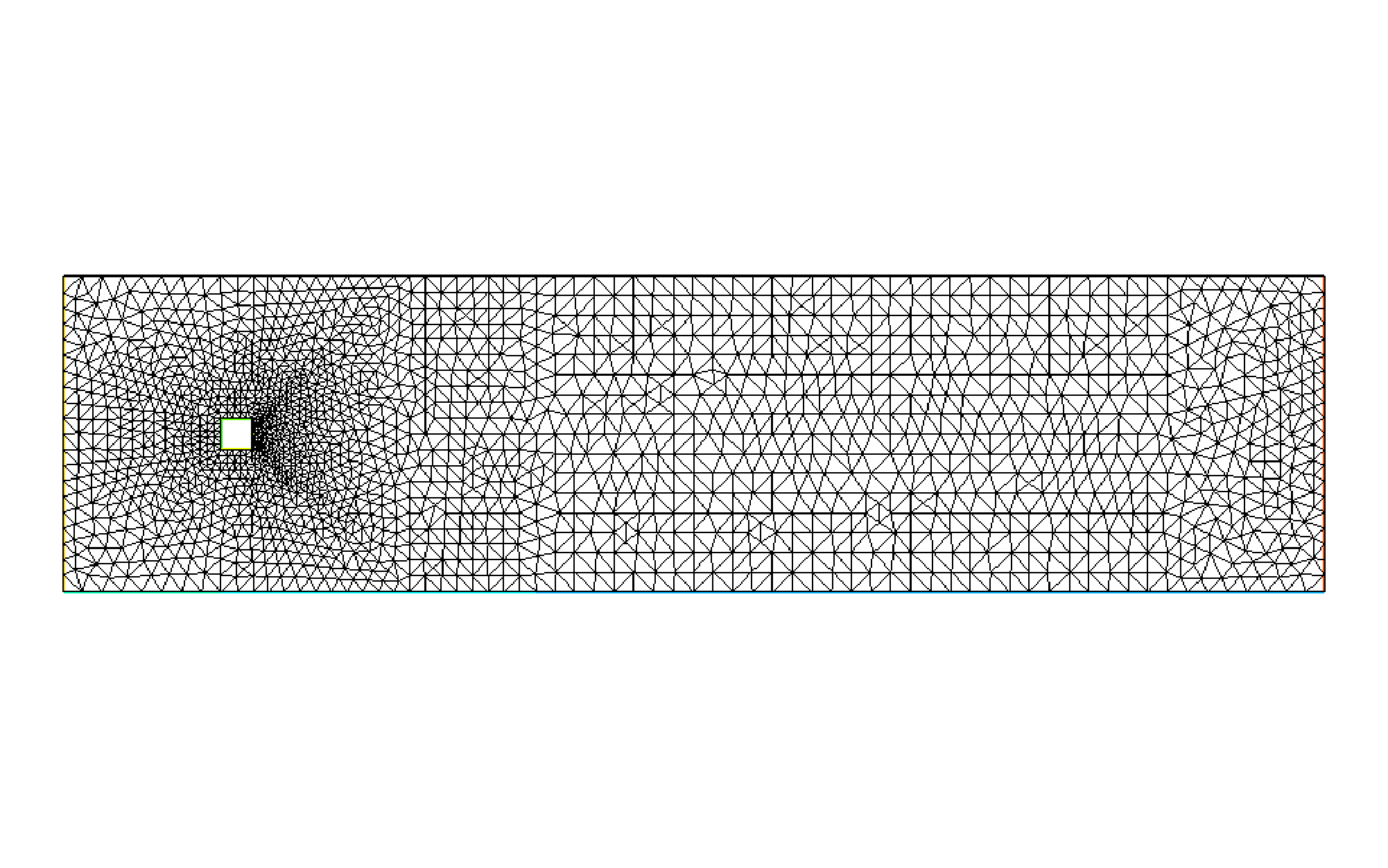} 
    \caption{Flow passing an obstacle; the unstructured mesh used in the numerical experiments.}
    \label{Fig;2dDomain}
\end{figure}
%\vspace{-0.2cm}

\begin{figure}[h!]
    \centering
    \includegraphics[width=1.0\textwidth]{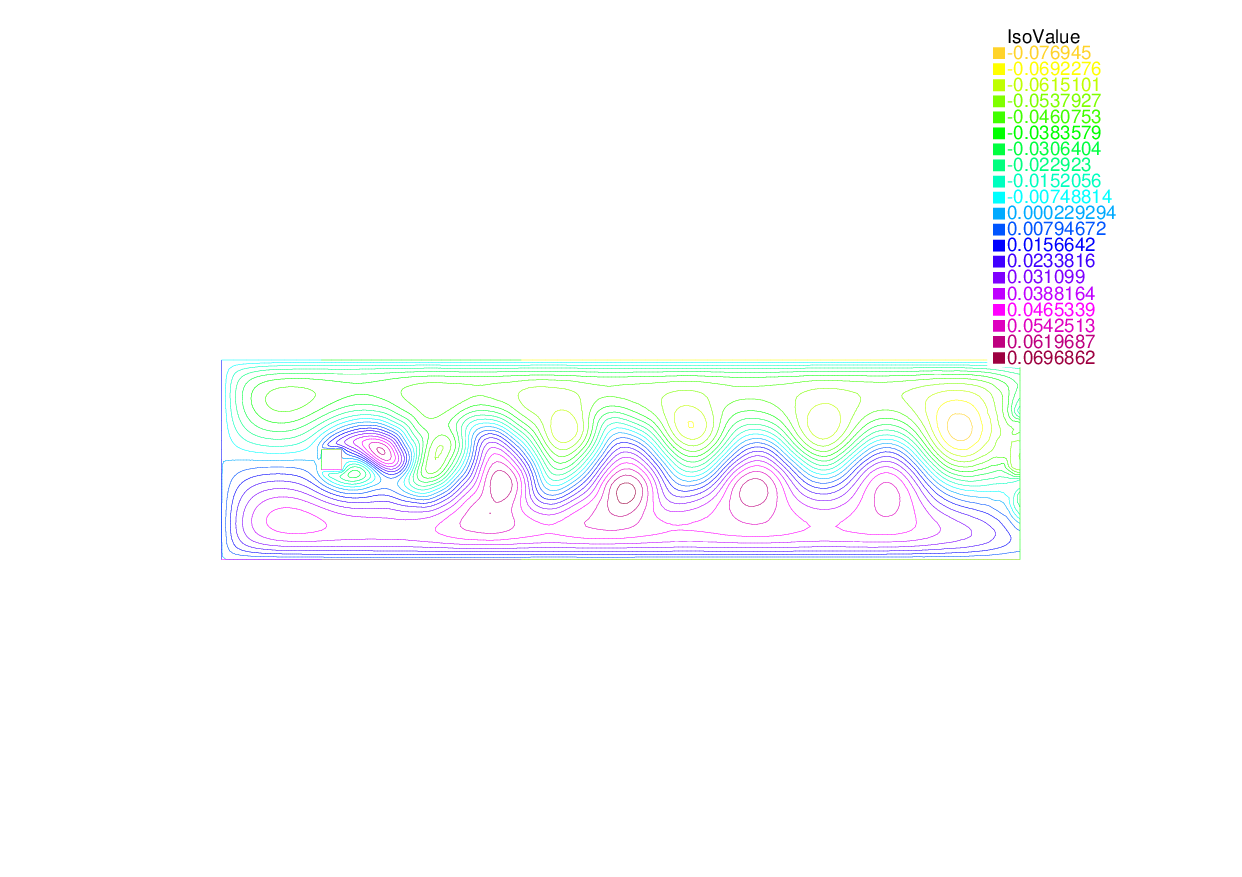} 
    \caption{Streamlines for $\Rey=500$ at $t=45$.}
    \label{Fig;StreamLine}
\end{figure}
We utilize the finite element method, specifically employing FreeFem++ \cite{FreeFEM} for our simulations. For spatial discretization, we implement P2-P1 Taylor-Hood mixed finite elements, while a second-order Crank-Nicolson scheme is applied for temporal discretization (see Chapter 9 of \cite{L08} for a comprehensive description of the scheme). The problem is computed on a Delaunay-Voronoi generated triangular mesh, which features a higher density of mesh points near the obstacle and a lower density in other areas, resulting in $17$K velocity degrees of freedom, as illustrated in \Cref{Fig;2dDomain}. We set $\Delta t = 0.01$ and conducted the simulation from $T = 0$ to $T = 50$. \Cref{Fig;StreamLine} shows a snapshot of the flow streamlines at $t = 45$ for $\Rey = 500$.

\subsection*{Energy Dissipation Rate} To numerically study the dependence of \(\eps\) on the Reynolds number, we conducted tests with varying Reynolds numbers five times, ranging from $100$ to $2000$, by adjusting the viscosity. In Figures~\ref{Fig;2dDRvsT} and~\ref{Fig;2dDRvsT2}, the instantaneous energy dissipation rate $\varepsilon(t)$ is plotted as a function of time for the two proposed mixing lengths, $l_1(\mathbf{x})$ and $l_2(\mathbf{x})$, respectively.   As expected and observed, the pointwise values appear disordered and unpredictable. Therefore, we instead focus on the statistical (time-averaged) quantity. After averaging the energy dissipation over time, we obtained five data points, each representing \(\eps\) corresponding to one of the Reynolds numbers. We employed Matlab's nonlinear least squares tool to fit the data to the form \(a + b \, \Rey^{-1}\). The initial guesses for the iterative optimization solver were set to \(a_0 = 0.5\) and \(b_0 = 5\).  Figures  \ref{Fig;2dEDR} and \ref{Fig;2dEDR2}  illustrate that the long-term average of the energy dissipation rates for the model scales as \(\eps \sim 1+ \Rey^{-1}\), consistent with our analysis given in \Cref{MainThm}, and the range of the mixing length suggested in \eqref{MeshIndp} and \eqref{MeshDep}.
\begin{figure}[h!]
    \centering
    \includegraphics[width=0.7\textwidth]{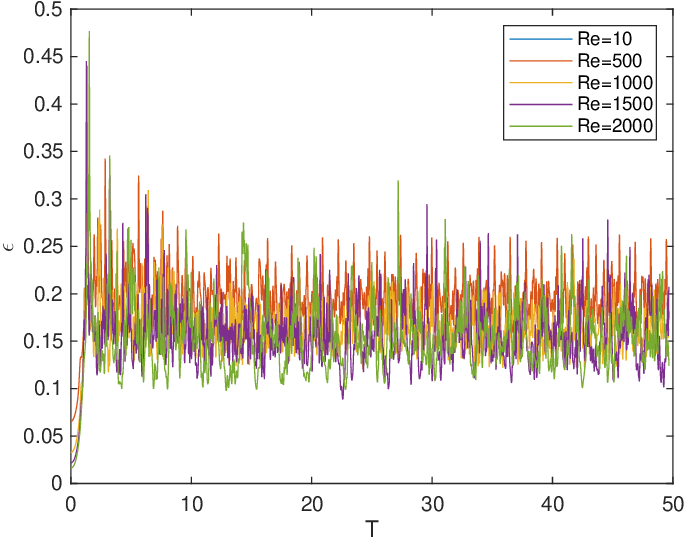} 
    \caption{Time evolution of the energy dissipation rate $\varepsilon$ using the mixing length $l_1(\mathbf{x})$ defined in~\eqref{L1}.}
    \label{Fig;2dDRvsT}
\end{figure}

\begin{figure}[h!]
    \centering
    \includegraphics[width=0.7\textwidth]{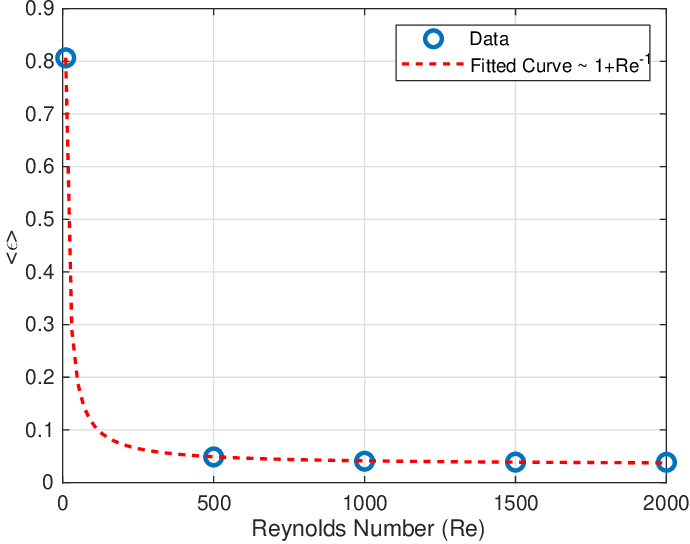} 
   \caption{The time-averaged energy dissipation rate $\varepsilon$ using the mixing length $l_1(\mathbf{x})$ defined in~\eqref{L1}.}
    \label{Fig;2dEDR}
\end{figure}

\begin{figure}[h!]
    \centering
    \includegraphics[width=0.7\textwidth]{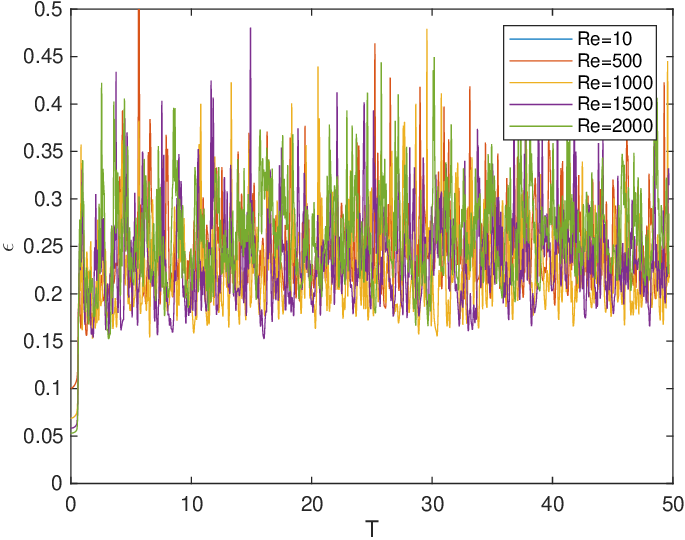} 
    \caption{Time evolution of the energy dissipation rate $\varepsilon$ using the mixing length $l_2(\mathbf{x})$ defined in~\eqref{L2}.}
    \label{Fig;2dDRvsT2}
\end{figure}

\begin{figure}[h!]
    \centering
    \includegraphics[width=0.7\textwidth]{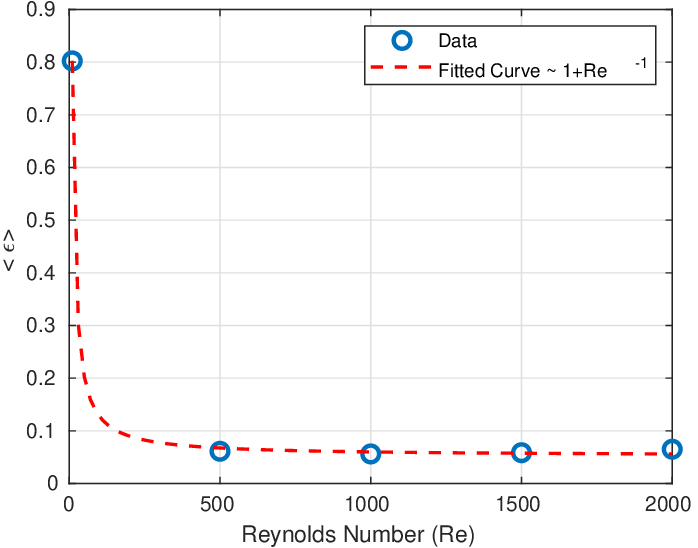} 
\caption{The time-averaged energy dissipation rate $\varepsilon$ using the mixing length $l_2(\mathbf{x})$ defined in~\eqref{L2}.}
\label{Fig;2dEDR2}
\end{figure}

%\subsection{3D Taylor-Couette flow}

\section{Discussion}\label{S7}

 This paper has addressed potential issues inherent in traditional eddy viscosity turbulence models, particularly concerning excessive dissipation and parameter specification. Considering the backscatter eddy viscosity model \eqref{BLM} suggested in \cite{RLZ19}, we rigorously derived an upper bound on the energy dissipation rate per unit mass. Theorem~\ref{MainThm} proves that the dissipation ratio for the backscatter eddy viscosity model~\eqref{BLM} remains uniformly bounded in the infinite Reynolds number limit, consistent with the heuristic scaling arguments outlined in the introduction. The analysis suggests an appropriate range for the model's mixing length, which is further supported by our numerical results.  In this manuscript, we also address the gap in well-posedness.

 Despite our progress, we recognize that calibrating certain model parameters remains challenging---especially the parameter \( \beta \), for which it is not yet clear whether it should be treated as constant or spatially variable. Unlike the mixing length $\l(\bfx)$, we have not identified a clear mathematical or physical justification to guide this choice.

%Furthermore, our findings indicate that this model exhibits statistical behavior similar to that of the simpler Baldwin-Lomax model, while providing compelling computational evidence of backscatter phenomena.

 The estimate on $\langle \varepsilon \rangle$ in \eqref{FinalBd} is expressed in terms of controlled quantities such as $\nu$, $F$, $L$, and a derived quantity $U$. However, realizing a specific value of $U$ remains elusive, leading to a lack of a priori information regarding the dissipation rate or the intricate structure of turbulent flow for a given viscosity and applied body force. Additionally,  extending our estimates of $\varepsilon$ to channel and shear flow cases remains an open problem, which would provide valuable insights into near-wall behavior. Furthermore, broadening our analysis to two-dimensional turbulence presents an intriguing challenge, necessitating bounds on not only energy but also enstrophy dissipation rate due to the energy cascade scenario in $2D$ \cite{FJMR02}.

\section*{Data Availability Statement}
The data that support the findings of this study are available from the corresponding author upon reasonable request.

\section*{Conflict of Interest}
The authors declare that there is no conflict of interest regarding the publication of this article.

\hfill

\noindent Ali Pakzad\\
{\footnotesize
Department of Mathematics\\
California State University Northridge \\
Email: \url{pakzad@csun.edu}
}

\hfill

\noindent Farjana Siddiqua\\
{\footnotesize
School of Mathematics\\
Georgia Institute of Technology \\
Email: \url{fsiddiqua3@gatech.edu}\\
}

\end{document}